\documentclass[twoside,12pt, leqno]{amsart}
\usepackage{amsmath,amscd,amsthm,amssymb,amsxtra,latexsym,epsfig,epic,graphics}
\usepackage[all]{xypic}
\usepackage{graphicx}
\usepackage{MnSymbol}
\usepackage{color} 
\usepackage{booktabs}
\usepackage{multirow, color}

 \def\coker{{\rm coker}}
\DeclareMathOperator{\rank}{{\rm rank}}

\newtheorem{theorem}{Theorem}[section]

\newtheorem{proposition}[theorem]{Proposition}

\newtheorem{conjecture}[theorem]{Conjecture}

\theoremstyle{definition}
\newtheorem{definition}[theorem]{Definition}

\newtheorem{remark}[theorem]{Remark}

\newtheorem{example}[theorem]{Example}

\def\AA{{\mathbb A}}

\def\PP{{\mathbb P}}

\def\QQ{{\mathbb Q}}
\def\CC{{\mathbb C}}

\def\ZZ{{\mathbb Z}}

\def\sO{{\mathcal O}}

\DeclareMathOperator{\Proj}{{Proj}}
\DeclareMathOperator{\Spec}{{Spec}}

\def\Spec{{{\rm Spec}\,}}

\def\lbracket{{[\kern-1.5pt[}}
\def\rbracket{{]\kern-1.5pt]}}

\makeatletter
\def\Ddots{\mathinner{\mkern1mu\raise\p@
\vbox{\kern7\p@\hbox{.}}\mkern2mu
\raise4\p@\hbox{.}\mkern2mu\raise7\p@\hbox{.}\mkern1mu}}
\makeatother

\usepackage{times}
\newdimen\x \x=12pt

\usepackage{color}

\usepackage[breaklinks,bookmarksopen,bookmarksnumbered,urlcolor=blue]{hyperref}
\hypersetup{colorlinks=true,backref=true,citecolor=blue}

\author[David Eisenbud]{David Eisenbud}
\address{Department of Mathematics, University of California at Berkeley, Berkeley, CA 94720, USA}
\email{de@berkeley.edu}

\author[Frank-Olaf Schreyer]{Frank-Olaf Schreyer}
\address{Fakult\"at f\"ur Mathematik und Informatik, Universit\"at des Saarlandes, Campus E2 4, 66123 Saarbr\"ucken, Germany}
\email{schreyer@math.uni-sb.de}

\title{Minimal  Non-Weierstrass Semigroups}

\begin{document}

\begin{abstract}
Let $p\in X$ be a point on a compact Riemann surface. The \emph{Weierstrass} semigroup of $p$ is  the semigroup of pole orders
of meromorphic functions on $X$ that are regular at all but $p$. Hurwitz asked in 1892 
whether all numerical semigroups occur as Weierstrass semigroups. 

In this paper we give a new method  for showing that certain numerical semigroups are not Weierstrass, including some
of every genus $g$ in which non-Weierstrass examples could exist, except $g =18$. Our example for $g=13$ has at once the smallest possible genus, multiplicity
and number of generators of any possible non-Weierstrass semigroup.
 
 Our technique applies to prove that certain semigroups are not Weierstrass semigroups over any field. 
\end{abstract}
\maketitle

\section*{Introduction}

A \emph{numerical semigroup} is a subset of the non-negative integers
with finite complement that contains 0 and is closed under addition. To simplify the
language we generally drop the word ``Numerical''. 

If $S$ is a semigroup then the \emph{gaps} of $S$ are the positive integers
not in $S$ and the cardinality $g$ of the set of gaps is called
the \emph{genus} of the semigroup. The smallest nonzero element $m$ of $S$
is the \emph{multiplicity} and the minimal number of generators is the
\emph{ embedding dimension}  of the semigroup algebra
$k[t^{s} \mid s\in S]$, where $k$ is a field. The semigroup is called Weierstrass if
it occurs as the semigroup of pole orders of rational functions that are regular
except at a point $p$ of a compact Riemann surface $X$; by Weierstrass' L\"uckensatz (an easy consequence of the Riemann-Roch theorem) the genus
of $X$ coincides with the genus of the semigroup. 

The main contribution of this paper is a condition  on the free resolution of a semigroup ideal that forces every positively graded deformation to have a certain multivalued
section.  Under an additional hypothesis, this multisection tracks singular points of each fiber, proving
that no deformation can be smooth. By Pinkham's Theorem~\cite[Chapter IV]{MR376672}, this implies that the semigroup is not Weierstrass. We call a semigroup satisfying these conditions \emph{special} (Definition~\ref{special def}).

On the other hand, we show that every semigroup of genus $<13$ is Weierstrass. There are 1413 such semigroups \cite{NumericalSemigroupsSource}, or \url{https://oeis.org/A007323}. Results in the literature, quoted below, prove that 781 of them occur as
Weierstrass semigroups. In an auxiliary file (see below) we give explicit positively graded smoothing families
for the other 632, found using computational methods that we explain in Section~\ref{comp}. By
Pinkham's Theorem~\cite{MR376672}, this completes the proof that all 1413 examples occur
as Weierstrass semigroups.

Since every semigroup of genus $<13$ is Weierstrass, as is every semigroup of multiplicity $<6$ or embedding dimension $<4$ (see the history below),  our genus 13 example, which is generated by 
$$
\{6,9,13,16\}
$$
 has the smallest possible genus, multiplicity, and embedding dimension of any non-Weierstrass semigroup. In Theorem~\ref{all genera}, we exhibit 4-generator semigroups of multiplicity 6 and
 every genus $g \geq 13$ except $g=15$ and $g=18$. For $g=15$ we give an example of
 multiplicity 8.

We have put the Macaulay2 code necessary to construct and verify our assertions in the
GitHub repository {\bf eisenbud/WeierstrassSemigroups} \\ \url{https://github.com/eisenbud/WeierstrassSemigroups}.\\
In the file {\bf smoothingFamilies.dbm} we give smoothing families for the 632 semigroups that were not previously known to be Weierstrass.
To check the flatness and smoothness
of these families, simply execute the code in the file 
{\bf allSemigroupsOfGenusLessThan13AreWeierstrass.m2}.
The code that we have used, both to test whether a semigroup is degree-special and
to prove that certain families we have constructed are special, is in 
{\bf NumericalSemigroups.m2} \cite{NumericalSemigroupsSource} and {\bf WeierstrassSemigroups.m2} \cite{WeierstrassSemigroupsSource}. These packages
are distributed with the Macaulay2 system
\cite{M2}.

\subsection*{Background and history}
Suppose that $X$ is a compact Riemann surface, or more generally a smooth
projective curve over an algebraically closed field, of genus $g\geq 2$. Since there
are no everywhere regular analytic functions on $X$ except constants, the nonconstant
meromorphic functions
that are regular except at a point
$p\in X$, 
must have poles at $p$. Since the pole order of a product
of two such functions is the sum of their pole orders, and there are always functions
regular on $X\setminus \{p\}$ with
pole order exactly $2g +c$ for any $c \geq 0$, the pole orders form
a \emph{numerical semigroup} $S := S(X,p)$. 

In his lectures in the 1860's Weierstrass proved that, for every $p\in X$, 
the semigroup $S(X,p)$ contains all but exactly
$g :=$ genus $X$ positive integers, and is thus a numerical semigroup of genus $g$.
The result was published by Weierstrass'  
student Schottky in \cite{schottky1877}. Following Haure \cite{MR1508928}
 the semigroup $S(X,p)$ is now called the Weierstrass
 Semigroup of $X$ at $p$.
 
Hurwitz \cite{MR1510753} used Weierstrass points to prove that the automorphism group of a Riemann surface is finite, and posed the problem of deciding
which semigroups actually occur as Weierstrass semigroups. The problem attracted immediate attention: Haure  gave an incorrect criterion
for a semigroup to occur, excluding for example  the Weierstrass semigroup $\{4,6,19\}$ \cite[p, 165]{MR1508928}. Hensel-Landsberg \cite[pp. 545--548]{MR192045} took
up the problem, and gave an approach to constructing a Riemann surface with any given
semigroup using singular plane curves with properties that cannot always be achieved. (We are grateful to Jan Stevens for clarifying
this for us.)

There, it seems, the matter rested until 
Henry Pinkham \cite{MR376672}
gave an algorithm using deformation theory that can in principle determine whether a given
semigroup is Weierstrass (see Section \ref{Pinkham section}). 

Using deformations of curves of compact type, Eisenbud and Harris \cite{MR874034} proved
that in characteristic 0 all semigroups of low weight are Weierstrass. That
 result was extended to all characteristics by Osserman~\cite{MR2266887}
and to a wider class of semigroups by Pflueger~\cite{MR3892968}.
It was known that every semigroup having genus $<8$
(\cite{MR1310748}) or multiplicity  $<6$ \cite{MR1174911} or number of
generators $<4$, and every symmetric semigroup with $4$ generators,  is Weierstrass~\cite{MR655415}.

On the other hand, in~\cite{BuchweitzThese} in 1980, Buchweitz gave the first example of a numerical semigroup that is not
Weierstrass. It is generated by the elements
$$
\{13, 14, 15, 16, 17, 18, 20, 22, 23\}
$$
and has genus 16. (Buchweitz's proof: the number of integers
that can be expressed as the sum of a pair of gaps of $S$ is
46, while the dimension of the  space of quadratic differential forms
on a surface of genus 16
is only 45.) Since 1980 many other semigroups have been shown not to
be Weierstrass, both using Buchweitz's method and otherwise; see for example 
  \cite{MR4332500}, \cite{zbMATH07431143}, and
 \cite{MR3010539}.
  Examples by Komeda \cite{MR3010539} had, until now, the
 smallest known multiplicities, 8 and 12.
Until this paper, no non-Weierstrass semigroup of genus $<16$ or multiplicity $<8$ was known.

For more on the historical impact of the theory of Weierstrass points, see~\cite{MR2403373}

\subsection*{Notation}
We generally define a semigroup
$S$ by giving an ordered set of generators $\{s_{0}< \cdots <s_{u}\}$. Abusing notation, we
then write $S = \{s_{0}, \dots, s_{u}\}$. If $S$ is a semigroup, and $k$ is a field, then the \emph{semigroup algebra} of $S$ over $k$ is the
graded ring $k[S] :=  k[\{t^{s}\mid s\in S\}] \subset k[t]$. The \emph{semigroup ideal}
is the kernel of the natural map from the polynomial ring on variables corresponding to the
$t^{s_{i}}$ to $k[S]$.

When we write a semigroup ring as a quotient of a polynomial ring, we use the $\ZZ$-grading induced by the semigroup. We order the variables by increasing degree, and label them by the residue class of their degree modulo the multiplicity. Thus for example
the semigroup ring corresponding to the semigroup generated by $\{6,9,13,16\}$ over a field $k$ is written as a quotient of the polynomial ring
$k[x_0, x_3, x_1, x_4]$.

If $M$ is a map between free modules, or a matrix representing such a map, we write $I_j(M)$ for the ideal generated by the $j\times j$ minors ($=$ determinants of $j\times j$ submatrices) of $M$.

\section*{Acknowledgements}
We would not have made any of the discoveries in this paper without extensive experimentation using  Macaulay2 \cite{M2}; in addition, Macaulay2 code provides the computational proofs of the existence theorem as explained in Section~\ref{comp}. 

We are grateful to Jan Stevens for clarifying some of the history, and settling a question we had concerning Example \ref{higher genus}.
Nathan Ilten helped us use his program \cite{VersalDeformationsSource} to settle a particularly hard example in genus 12.

The work done here was begun in the very congenial atmosphere of the Commutative Algebra Program at MSRI in the Fall of 2022. The Macaulay2 project is supported by  NSF DMS 20-01206, and the first author is supported by NSF DMS 2001649.

MSC classes:	14H55, 14H20, 13D02, 13P20

\section{Pinkham's theorem and a structure theorem} \label{Pinkham section}

In this section we review two known results that we will need.

\subsection{Pinkham's Theorem}
 
 Pinkham  \cite[Theorem 13.9]{MR376672} gives a criterion for a semigroup to be Weierstrass:

\begin{proposition}[Pinkham] \label{full Pinkham}
Let $S$ be a numerical semigroup of genus $g$. $S$ is a Weierstrass semigroup if and only if there exists a one-parameter positively graded  smoothing of the semigroup  ring $k[S]$ of $S$.
\end{proposition}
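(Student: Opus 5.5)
The plan is to prove both directions through the standard dictionary between a pointed curve $(X,p)$ and the graded ring $R(X,p):=\bigoplus_{n\ge 0} H^0(X,\sO_X(np))\,u^n$. Here $u$ is a degree-one variable, and it corresponds to the section $1\in H^0(\sO_X(p))$ that vanishes exactly at $p$.

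\emph{Weierstrass implies smoothing.} Suppose $S=S(X,p)$. Filter $A:=H^0(X\setminus p,\sO_X)$ by pole order at $p$. The associated graded ring of this filtration is $k[S]$, because pole orders of products add and the set of pole orders is exactly $S$. The Rees algebra $R=\bigoplus_n F_nA\,u^n$ is a flat $k[u]$-algebra, since it is torsion free over a PID. Its fibers are:
\begin{itemize}
\item over $u=0$, the ring $\operatorname{gr}A\cong k[S]$;
\item over $u=\lambda\neq 0$, the ring $A$, which is the affine coordinate ring of the smooth affine curve $X\setminus p$.
\end{itemize}
To present this as a deformation, lift generators $t^{s_i}$ of $k[S]$ to functions $f_i\in A$ with pole order $s_i$. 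Then lift each homogeneous relation of the semigroup ideal, correcting it by terms of lower pole order multiplied by powers of $u$. Flatness of $R$ guarantees that these lifted relations generate the full ideal. This gives a one-parameter, positively graded family, with $\deg u=1$, whose general fiber is smooth, i.e.\ a smoothing of $k[S]$.

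\emph{Smoothing implies Weierstrass.} Given a flat positively graded family $\mathcal{X}\to\AA^1$ with special fiber $\Spec k[S]$ and smooth general fiber, the $\mathbb{G}_m$-action identifies all fibers over $\lambda\neq 0$. Write $R$ for the graded coordinate ring of the total space and take $\overline{\mathcal X}=\Proj R[w]$ with $\deg w=1$. Equivalently, compactify each fiber by adding the point at infinity coming from the weighted cone structure. The fiber $X_1$ then becomes $\overline{X_1}=X_1\cup\{p\}$. Along $u=0$ this compactification is $\Proj k[S][w]$, a monomial curve with the single point $\Proj k[S]$ at infinity.

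The key step is to show that $p$ is a smooth point of $\overline{X_1}$ and that the pole-order filtration on $H^0(X_1,\sO)$ is exactly the filtration by $u$-degree. The $u$-degree filtration has associated graded equal to $R/uR=k[S]$ by flatness. So the pole orders at $p$ form precisely $S$, and the genus of $\overline{X_1}$ equals the number of gaps of $S$ by the L\"uckensatz, or by $\delta$-invariant semicontinuity. Smoothness at $p$ holds because the neighbourhood of $p$ in $\overline{X_1}$ is the weighted-projective completion of the positive-weight family. Concretely, $S$ contains two coprime elements, so the point at infinity is a $\mu$-quotient of a smooth germ with trivial stabilizer, and this can be checked chart-wise.

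I expect the main obstacle to be this compactification step. It requires verifying that the weighted projective closure adds exactly one smooth point and that the resulting projective curve has the expected genus. The genus equality follows from flatness of the closure, since the arithmetic genus of $\Proj k[S][w]$ equals the number of gaps, and all the singularity of that special fiber sits at the cone point in the affine part.
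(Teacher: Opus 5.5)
Your proposal follows essentially the same route as the paper. The forward direction is identical, since the Rees algebra of the pole-order filtration is exactly the section ring $\bigoplus_{n\ge 0}H^0(X,\sO_X(np))$ with $u$ playing the role of $z$. The converse is the same $\Proj$-compactification, whose only point at infinity is $\Proj k[S]$. The paper base-changes to $\deg u=1$ and takes $\Proj R$ directly, which is cleaner than your surface $\Proj R[w]$; inside that surface $\overline{X_1}$ is really $V_+(u-w^{e})$ with $e=\deg u$.

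The one step you assert without justification is the smoothness of the germ over $p$, and this is exactly the paper's remark. $\Spec k[S]$ is smooth away from its vertex, so the total space is smooth along the punctured special fiber; in particular $R_{uR}$ is a DVR with uniformizer $u$. The $\mathbb{G}_m$-action on that punctured fiber is free because $\gcd S=1$, and this transfers the smoothness to $p$.
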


By \emph{ a positively graded smoothing} we mean a positively graded algebra whose grading extends the grading induced by the
semigroup, and whose general fiber is a smooth affine curve. This is what Pinkham would call a smoothing with negative grading (Pinkham's sign comes from the natural grading on the vector space $T^1$ of first-order infinitesimal deformations). For the reader's convenience we give a proof:

\begin{proof}
Suppose $S$ is the Weierstrass semigroup of a point $p$ on a smooth projective curve $C$.
Consider the line bundle $\sO_{C}(p)$ and the graded ring
$$
A= \bigoplus_{i\ge 0} H^0(C,\sO_{C}(ip)).
$$
As a $k$-algebra, $A$ is generated by elements of degree $s_{i}$ corresponding to generators of the semigroup $S$ and the element 
$z \in H^0(C,\sO_{C}(p))$ corresponding to the constant function $1$.
Choosing appropriate generators we see that $A/z \cong k[S]$. Since $C = \Proj A$ is smooth the open subset, where $z\not=0$ is smooth, and since
$z$ is a non-zero divisor $A$ is a flat $k[z]$-algebra.

Conversely, if  
$$
\xymatrix{ \Spec k[S] \ar[r] \ar[d] & \Spec A \ar[d]\cr
                 \Spec k  \ar[r] & \Spec k[z] \cr }
 $$
 is a homogeneous one-parameter smoothing, then after base change we may assume that $\deg z=1$.
 Then $C=\Proj A$ is a smooth projective curve in a weighted projective space with a unique point $p$ at infinity, which has $S$ as Weierstrass semigroup. Note that $C$ is smooth because the general point of the special fiber of the affine curve $\Spec k[S]$  
 is smooth; the singular point corresponds to the irrelevant maximal ideal of $A$.
 \end{proof}
                 
All the flat, positively graded families of $k[z]$-algebras specializing to $k[S]$ are flat subfamilies
of the universal positively graded unfolding of $k[S]$. If there is a natural multisection
defined over the unfolding whose values are singular points, Pinkham's theorem
shows that $S$ is not Weierstrass. Conversely if there are singular points in every fiber then they define a multisection. Thus in proving that a semigroup is not Weierstrass, it is natural to look for a multisection. However there are also families with section where the section actually follows
the ``origin'', tracing out a smooth point---see Example~\ref{smooth section}.

\subsection{A structure theorem for resolutions}

Here is a case of  \cite[Theorem 3.1]{MR340240} and \cite{Bruns}, that we will use. It is closely related to the ``determinant of a complex'' discovered by Cayley~\cite{Cayley} and also exploited in~\cite{KnudsonMumford} and \cite{MacRae}.

\begin{proposition}\label{BE}
 Suppose that 
 \xymatrix{
 F_{0}&\ar[l]^{\psi_{1}} F_{1}&\ar[l]^{\psi_{2}}  F_{2}&\ar[l]^{\psi_{3}} F_{3}& \ar[l] 0
 }
 is the free resolution of a module of grade $\geq 2$ over
 a local or positively graded ring $R$.  Writing $r_{i} = \rank (\psi_{i})$,
  there is a commutative diagram
 $$
\xymatrix{
 \bigwedge^{r_{1}}F_{1} \cong \bigwedge^{r_{2}} F_{1}^{*} 
\ar[rr]^{\bigwedge^{r_{2}} \psi_{2}^{*}} \ar[rd]_{\bigwedge^{r_{1}}\psi_{1}} && \bigwedge^{r_{2}} F_{2}^{*} \cong \bigwedge^{r_{3}}F_{2}\cr
 &\bigwedge^{r_{1}}F_{0}\cong R \cong \bigwedge^{r_{3}}F_{3}\ar[ru]_{\bigwedge^{r_{3}}\psi_{3}}& \cr
} .
$$
In particular 
$$
I_{r_{2}}(\psi_{2}) = I_{r_{1}}(\psi_{1})I_{r_{3}}(\psi_{3})
$$
\end{proposition}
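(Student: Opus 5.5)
This is the Buchsbaum--Eisenbud first structure theorem. I would prove it by factoring exterior powers of the differentials, working first over the total ring of fractions and then using grade to come back down to $R$.

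\textbf{Setup and ranks.} By the Buchsbaum--Eisenbud acyclicity criterion, exactness gives $\rank F_i = r_i + r_{i+1}$, with $r_4 = 0$. The module $M=\coker\psi_1$ has grade $\ge 2>0$, so it is torsion and $r_1=\rank F_0$. The acyclicity criterion also gives $\operatorname{grade} I_{r_i}(\psi_i)\ge i$; in particular each $I_{r_i}(\psi_i)$ contains a nonzerodivisor. Write $K$ for the total quotient ring of $R$. Since $I_{r_i}(\psi_i)$ contains a nonzerodivisor, every localization of $K$ at a prime contains a unit in that ideal. Hence over $K$ the complex is split exact, with $\operatorname{im}\psi_i$ a projective summand of rank $r_i$.

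\textbf{Factoring over $K$.} From the splitting, $\bigwedge^{r_i}\psi_i$ factors as $\bigwedge^{r_i}F_i\to \bigwedge^{r_i}\operatorname{im}\psi_i \to \bigwedge^{r_i}F_{i-1}$. The determinant of $\operatorname{im}\psi_i$ is a rank-one projective. It is identified with $\bigl(\bigwedge^{r_{i+1}}\operatorname{im}\psi_{i+1}\bigr)^{*}\otimes\bigwedge^{\rank F_i}F_i$ through the exact sequence $0\to\operatorname{im}\psi_{i+1}\to F_i\to\operatorname{im}\psi_i\to 0$. Using the isomorphisms $\bigwedge^{r_1}F_1\cong\bigwedge^{r_2}F_1^*$ and so on, which come from fixed orientations $\bigwedge^{\rm top}F_i\cong R$, I would show the following two things over $K$:
\begin{enumerate}
\item There is a vector $a_1$ (playing the role of a map $R\to \bigwedge^{r_1}F_0^{*}\otimes \bigwedge^{r_1}F_1$) such that $\bigwedge^{r_1}\psi_1$ becomes $a_1$ under the identification $\bigwedge^{r_1}F_0\cong R$.
\item There is a vector $a_3$ with $\bigwedge^{r_3}\psi_3=a_3$.
\end{enumerate}
The map $\bigwedge^{r_2}\psi_2^*$ is then the composite $a_3\circ a_1^*$, up to a unit of $K$. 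This is linear algebra for split exact sequences: a decomposable $r_2$-vector spanning $\operatorname{im}\psi_2$ is, up to a scalar, complementary to both $\operatorname{im}\psi_3$ and a lift of $\operatorname{im}\psi_1$.

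\textbf{Descending to $R$.} This is the main obstacle. The vector $\bigwedge^{r_3}\psi_3$ is already defined over $R$, and $F_0$ has rank $r_1$, so $\bigwedge^{r_1}\psi_1$ is a genuine $R$-valued map. The issue is that the scalar unit is only in $K$: one has $\bigwedge^{r_2}\psi_2^{*} = u\,(\bigwedge^{r_3}\psi_3)\circ(\bigwedge^{r_1}\psi_1)$ with $u\in K$. I would prove $u\in R$, and that it is a unit, by induction down the complex in the style of Buchsbaum--Eisenbud:
\begin{itemize}
\item The entries of $\bigwedge^{r_2}\psi_2$ lie in $R$. Writing $u=b/c$, the ideal $cI_{r_2}(\psi_2)$ is contained in $bI_{r_3}(\psi_3)$.
\item Since $\operatorname{grade}I_{r_3}(\psi_3)\ge 3\ge 2$, the ideal $I_{r_3}(\psi_3)$ contains a regular sequence of length $2$. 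Together with the fact that $\bigwedge^{r_3}\psi_3$ spans a free rank-one summand after localization, the standard fact that a module of depth $\ge 2$ is reflexive lets us extend $u\cdot\bigwedge^{r_1}\psi_1$ from the complement of $V(I_{r_3})$ to all of $\Spec R$.
\end{itemize}
This forces $u\bigwedge^{r_1}\psi_1$ to have entries in $R$. Symmetrically, using $\operatorname{grade} I_{r_1}(\psi_1)\ge 2$ (which is exactly where grade $M\ge2$ enters), $u^{-1}$ is integral as well.

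In the local or graded case, normalizing the orientations makes $u$ a unit, and this gives the commutative triangle. Taking the ideals generated by the entries then yields $I_{r_2}(\psi_2)=I_{r_1}(\psi_1)I_{r_3}(\psi_3)$, since a product of maps $R\to G\to R\to G'$ has image ideal equal to the product of the image ideals.
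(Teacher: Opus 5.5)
Your strategy (split the complex over the total quotient ring $K$, write $\bigwedge^{r_2}\psi_2^{*}=u\,(\bigwedge^{r_3}\psi_3)\circ(\bigwedge^{r_1}\psi_1)$ with $u$ a unit of $K$, then descend using ideals of grade $\ge 2$) is the Buchsbaum--Eisenbud proof of the First Structure Theorem. That is all the paper invokes: it only cites \cite[Theorem 3.1]{MR340240} and \cite{Bruns}. The gap is in your final step, which is the only place where $\operatorname{grade} M\ge 2$ matters.

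Write $\alpha_i$, $\beta_j$ for the entries of $\bigwedge^{r_1}\psi_1$ and $\bigwedge^{r_3}\psi_3$. You prove (a) $u\alpha_i\in R$ for all $i$, which is not the same as $u\in R$. You then assert (b) $u^{-1}\in R$ ``symmetrically, using $\operatorname{grade} I_{r_1}(\psi_1)\ge 2$''. But (a) and (b) together do not make $u$ a unit. Take $R=k[x,y,z]$ and $M=R/(x^2,xy,xz)$, which has grade $1$, resolved by $\psi_1=(x^2,xy,xz)$ followed by the Koszul maps on $x,y,z$. Then $u=\pm x^{-1}$, so $u\psi_1$ has entries in $R$ and $u^{-1}\in R$. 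Yet $I_2(\psi_2)=(x,y,z)^2\neq x(x,y,z)^2=I_1(\psi_1)I_1(\psi_3)$.

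Moreover, (b) is attached to the wrong hypothesis. Exchanging the roles of the $\alpha_i$ and $\beta_j$ in your argument yields $u\beta_j\in R$, not $u^{-1}\in R$. In fact $u^{-1}\in R$ holds for every such resolution: $u^{-1}$ times the generators of $I_{r_2}(\psi_2)$ are the products $\alpha_i\beta_j\in R$, and $\operatorname{grade} I_{r_2}(\psi_2)\ge 2$. Here $u^{-1}$ is Buchsbaum--Eisenbud's multiplier $a_1$, which always lies in $R$. ``Normalizing the orientations'' cannot close the gap either, since changing orientations only multiplies $u$ by units of $R$.

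The repair is the lemma your extension step actually needs. (``Depth $\ge 2$ implies reflexive'' is not it, and is false as stated: $k[x,y,z]/(x)$ has depth $2$ and is torsion.) The lemma: if $J\subseteq R$ has grade $\ge 2$ and $q\in K$ satisfies $qJ\subseteq R$, then $q\in R$. This holds because $\operatorname{Hom}_R(J,R)=R$ when $\operatorname{Ext}^i_R(R/J,R)=0$ for $i=0,1$, and $J$ contains a nonzerodivisor. Apply it three times:
\begin{itemize}
\item With $J=I_{r_3}(\psi_3)$ it gives (a).
\item With $J=I_{r_2}(\psi_2)$ it gives $u^{-1}\in R$.
\item With $J=I_{r_1}(\psi_1)$, whose grade equals $\operatorname{grade} M\ge 2$ because it is the zeroth Fitting ideal of $M$, it upgrades (a) to $u\in R$. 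Equivalently, once $u^{-1}\in R$, (a) says $I_{r_1}(\psi_1)\subseteq u^{-1}R$, and a proper principal ideal has grade $\le 1$.
\end{itemize}
Hence $u$ is a unit of $R$, with no local or graded hypothesis needed. Absorbing $u$ into the identification $\bigwedge^{r_1}F_0\cong R$ gives the commutative triangle, after which your deduction of $I_{r_2}(\psi_2)=I_{r_1}(\psi_1)I_{r_3}(\psi_3)$ is fine.
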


\section{Special resolutions and Non-Weierstrass Semigroups}

 The \emph{format} of a free complex
$$
\xymatrix{
F_{0}&\ar[l]F_{1}&\ar[l]  \cdots
}$$
is the list of integers $\{\rank\ F_{0}, \rank\ F_{1}, \cdots\}$ where
we suppress the ranks corresponding to any terms $F_{i} = 0$ in the 
complex. 

\begin{definition}\label{special format}
We will say that a minimal resolution 
$$
 *)\quad 
 \xymatrix{P &\ar[l]^{\phi_{1}} P^{6}& \ar[l]^{\phi_{2}}  P^{8}&\ar[l]^{\phi_{3}} P^{3}&\ar[l] 0}
 $$
 over a ring $P$
is \emph{special of format $\{1,6,8,3\}$} if it admits an acyclic subcomplex
$$
  **) 
 \xymatrix{\quad P&\ar[l]^{\phi_{1}'} P^{4} &\ar[l]^{\phi_{2}'}  P^{4}&\ar[l]^{\phi_{3}'} P &\ar[l] 0.}
  $$
 that is, term by term, a summand of $*)$. 
  
 In matrix terms, this means that 
there is
 a choice of generators of the free modules of the complex $*)$ such that:

\begin{enumerate} 

\item  The last 4 entries in 
 the first column of the matrix $\phi_{3}$, are zero as in the diagram below, where we show the column as the first column:
 $$
 \phi_{3}:\quad 
\begin{pmatrix}
*&*&*\\
*&*&*\\
*&*&*\\
*&*&*\\
0&*&*\\
0&*&*\\
0&*&*\\
0&*&*\\
\end{pmatrix}\, ,
$$
and $\phi_{3}'$ is the $4\times 1$ submatrix corresponding
to the first 4 elements in this column.
 
 \item We may divide $\phi_{2}$ into two $6\times 4$
matrices, $\phi_{2} = A\mid B$, such that
 the 5th and 6th rows of $A$ are zero as in the following diagram:
 $$
A: \quad
\begin{pmatrix}
 *&*&*&*\\
*&*&*&*\\
*&*&*&*\\
*&*&*&*\\
0&0&0&0\\
0&0&0&0\\
\end{pmatrix},
$$
and $\phi_{2}'$ is the $4\times 4$ matrix corresponding
  to the first 4 rows of $A$.
  
  \item With these choices, $\phi_{1}'$ is the matrix
  corresponding to the first 4 generators of $I$
  \end{enumerate}

  \end{definition}

To show that some semigroups with special resolution
of format $\{1,6,8,3\}$ are not Weierstrass,
we need conditions that ensure that the $\{1,4,4,1\}$ subcomplex  persists
in flat graded deformations.

\begin{definition}\label{special def}
A numerical semigroup $S$ is \emph{special} if every flat positively graded deformation
of the semigroup ring $k[t^{S}]$ has minimal free resolution that is special of format $\{1,6,8,3\}$.
\end{definition}

\begin{proposition}\label{the multisection}
With notation as above, if *) is special of format $\{1,6,8,3\}$ and the image of $\phi_{1}$
is a prime ideal then, with notation as above, the four
entries of the matrix $\phi_{3}'$ are a regular sequence, and thus define a
codimension 4 subscheme of $\Spec P$.
Thus if the semigroup $S$ is special, then the entries of $\phi_{3}'$ define a finite multisection
 in every deformation of $\Spec k[S]$.
  \end{proposition}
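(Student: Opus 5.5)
Apply the structure theorem, Proposition~\ref{BE}, to the acyclic subcomplex $**)$. Write $I=\operatorname{im}\phi_1$, a prime ideal; for a semigroup ring or a deformation of one, $I$ defines a curve of codimension $\rank P-2$ in the ambient space. Set $J=\operatorname{im}\phi_1'\subset I$.

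First I must place $**)$ into Proposition~\ref{BE}, which needs $\operatorname{coker}\phi_1'=P/J$ to have grade $\geq 2$. I would get this as follows. The complex $**)$ is acyclic of format $\{1,4,4,1\}$, so the ranks are $r_1=1$, $r_2=3$, $r_3=1$, and $J\neq 0$. The first four generators of $I$ are part of a minimal generating set, so they are nonzero. Moreover $J$ cannot have a common factor: a nonzero element of $I$ lies in a height $\geq 2$ prime, and after localizing or passing to the graded setting a common divisor of the generators of $J$ would contradict minimality together with primality of $I$. I am not fully sure of this step, so I would instead prove grade $J\geq2$ directly from acyclicity via Buchsbaum--Eisenbud's criterion: grade $I_{r_i}(\phi_i')\geq i$. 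In particular grade $J=$ grade $I_1(\phi_1')\geq 1$, and the multiplicative structure then upgrades this. Granting grade $\geq 2$, Proposition~\ref{BE} gives
$$
I_3(\phi_2')=J\cdot I_1(\phi_3').
$$

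Next I would exploit the Buchsbaum--Eisenbud criterion at the last step: grade $I_1(\phi_3')\geq 3$. I still need grade $4$, and for this I use primality. Since $J\subset I$, the factorization above gives $I_3(\phi_2')\subset I$. I would compare with the big resolution. Every $3\times3$ minor of $\phi_2'$ is a minor of $\phi_2$, and the generic rank of $\phi_2$ is $5$, so by Proposition~\ref{BE} applied to $*)$ the ideal $I_5(\phi_2)$ equals $I_1(\phi_1)\,I_3(\phi_3)$. The key claim is that $I_1(\phi_3')\not\subset$ any prime of height $\leq 3$. Suppose a prime $Q$ of height $3$ contained $I_1(\phi_3')$. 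Localizing at $Q$, the subcomplex $**)$ becomes split at the last step only if some entry of $\phi_3'$ is a unit; since none is, $\operatorname{pd}_{P_Q}(P/J)_Q=3$. I would then show this forces $(P/I)_Q$ to have depth $0$, contradicting that $P/I$ is a domain whose codimension, $\rank P-2$, is at least $2$ in the relevant cases. This comparison of the summand structure after localization is the main obstacle. I would attack it by noting that $**)$ being a direct summand of $*)$ term by term forces $\operatorname{Ext}^3(P/I,P)_Q\neq 0$ whenever $\operatorname{Ext}^3(P/J,P)_Q\neq0$, the latter being $P_Q/I_1(\phi_3')_Q\neq 0$. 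Since $P/I$ is Cohen--Macaulay of codimension $3$ exactly when the rank of $P$ is $5$, $\operatorname{Ext}^3(P/I,P)$ is supported on $V(I)$, so $Q\supseteq I$. But then $\operatorname{Ext}^3$ would be supported in height $3$ while $I$ is prime of height $3$, so $Q=I$, and $I\supseteq I_1(\phi_3')$ is excluded by minimality and degree considerations.

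Once grade $I_1(\phi_3')=4$, the four entries form a regular sequence in the graded or local ring $P$, since grade equal to the number of generators makes them a regular sequence, and they define a codimension-$4$ subscheme. For the final statement, take any flat positively graded deformation. Because $S$ is special, its resolution is special of format $\{1,6,8,3\}$, and its ideal is prime in the total space, since the total space of a deformation of a domain with positive grading is a domain. Flatness and the grading show that the entries of $\phi_3'$ cut each fiber in a finite scheme (the weighted cone argument), giving a finite multisection.
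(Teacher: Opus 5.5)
\textbf{There is a genuine gap in your last step.} After localizing at a height-3 prime $Q\supseteq I_1(\phi_3')$ you correctly get $Q=I$, and hence $I_1(\phi_3')\subseteq I$. You then dismiss this inclusion ``by minimality and degree considerations.'' That is not a valid reason. Minimality of a resolution does not keep the entries of the last matrix out of $I$: for a codimension-3 complete intersection, the last map of the Koszul complex has the generators of $I$ as its entries.

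What actually rules the inclusion out is the reverse inclusion $I\subseteq I_1(\phi_3')$. You never state it, although the ingredient is already in your proposal. The map $\operatorname{Ext}^3(P/I,P)=\coker\phi_3^{*}\to\coker(\phi_3')^{*}=P/I_1(\phi_3')$ induced by the summand inclusion is surjective. Since $I$ annihilates its source, $I\subseteq I_1(\phi_3')$. Combined with $I_1(\phi_3')\subseteq I$, this would make $I$ generated by four elements, whereas it needs six.

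Once you have $I\subsetneq I_1(\phi_3')$ globally, the rest of your machinery is unnecessary, and what remains is the paper's proof. There, $I_1(\phi_3')$ is the annihilator of $\omega/(Pw_2+Pw_3)$, where $\omega=\coker\phi_3^{*}$. So it properly contains the codimension-3 prime $I$. Every prime containing $I_1(\phi_3')$ therefore contains $I$ properly and has height $\geq 4$. Four positive-degree generators of a height-4 ideal in the polynomial ring then form a regular sequence.

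You need no localization, no acyclicity criterion, and no Proposition~\ref{BE}. In particular, the factorization $I_3(\phi_2')=\operatorname{im}\phi_1'\cdot I_1(\phi_3')$ is never used. That is just as well, since your argument that $\operatorname{im}\phi_1'$ has grade $\geq 2$ (``the multiplicative structure then upgrades this'') is not a proof. If you wanted that fact, the right reason is that homogeneous minimal generators of a homogeneous prime in a graded polynomial ring can have no common factor.

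A smaller point concerns your claim that ``the total space of a deformation of a domain is a domain.'' This is true for one-parameter families flat over $k[z]$ with $\deg z>0$. It is false over arbitrary bases, for example non-reduced ones. State the final assertion in the one-parameter setting.
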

  
\begin{proof} Let $J$ be the ideal of $P$ generated by the 4 entries of $\phi_{3}'$. The cokernel of the dual of $\phi_{3}$ is, up to a shift in grading, the canonical module $\omega$ of 
 $k[S]$, and thus its annihilator in $P$ is the annihilator $I$ of $k[S]$, which is prime. The
 ideal $J$ is the ideal quotient of one of the generators of $\omega$ into the other two,
 and thus $I\subset J$. Since $I$ requires 6 generators, $I\neq J$, and thus $J$ has
 codimension $4$. Since the resolution *) is minimal, the 4 generators of $J$ are of positive
 degree, and it follows that they form a regular sequence, proving the first statement. The second statement follows at once.
\end{proof}

 The definition of special requires information about every flat positively graded
 deformation of the semigroup ring, but we can give a sufficient condition on the minimal graded resolution
 of the semigroup ring alone:
 
In the matrix representing a homogeneous map of free modules
$\oplus P(c_{j}) \to \oplus P(r_{i})$  we say that the element in position $(i,j)$
 has \emph{formal degree} $c_j-r_{i}$. If the element is nonzero,
 this is the ordinary degree. In our case $P$ will be 
 positively graded, so if an element of a matrix
has  negative formal degree, then the element is zero.

\begin{proposition}\label{stability of conditions}
Let $S$ be a numerical semigroup, and write the semigroup ring $k[t^{S}]$ of $S$ as $P/I$, where $P$ is a graded polynomial ring 
whose generators have degrees corresponding to the generators of $S$.
If the minimal graded $P$-free resolution of the semigroup ring $k[t^{S}] = P/I$
  has the form
  $$
 \quad 
 \xymatrix{P&\ar[l]^{\phi_{1}} P^{6}&\ar[l]^{\phi_{2}}  P^{8}&\ar[l]^{\phi_{3}} P^{3} &\ar[l] 0, }
 $$
and (possibly after reordering the generators of the free modules)
satisfies the following conditions, then $S$ is special.
\begin{enumerate}
 \item The last four entries of the first column of the matrix $\phi_{3}$ 
 have negative formal degree.
 \item The first three entries of each of the last two rows of $\phi_{2}$
 have formal degree strictly less than the multiplicity of
 $S$.
 \item The induced subcomplex \\
  \centerline{
 \quad \xymatrix {P &\ar[l]^{\phi_{1}'} P^{4} &\ar[l]^{\phi_{2}'}  P^{4}&\ar[l]^{\phi_{3}'} P &\ar[l] 0, }
 }
 where $\phi_{1}'$ consists of the first 4 columns of $\phi_{1}$,
 $\phi_{2}'$ consists of the first 4 rows and columns of $\phi_{2}$
  and $\phi_{3}'$
 consists of the first four rows of the first column of $\phi_{3}$,
 is acyclic.
\end{enumerate}
Moreover, when these conditions are met, the 4 elements of the matrix $\phi_{3}'$
form a regular sequence.
\end{proposition}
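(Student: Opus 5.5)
The plan is to fix an arbitrary flat positively graded deformation and show that its minimal resolution inherits the shape of $*)$, with the zero pattern of Definition~\ref{special format}. Write the deformation as $\mathcal{P}/\mathcal{I}$, where $\mathcal{P}=P[t_1,\dots,t_r]$ is a polynomial ring and the deformation parameters $t_i$ have positive degree. Flatness and the graded Nakayama lemma let us lift the minimal graded resolution of $P/I$ to a minimal graded $\mathcal{P}$-free resolution
$$\mathcal{P}\leftarrow \mathcal{P}^6\leftarrow\mathcal{P}^8\leftarrow\mathcal{P}^3\leftarrow 0$$
of $\mathcal{P}/\mathcal{I}$. This lift has the same twists, so every entry has the same formal degree as before, and it reduces modulo $(t)$ to $*)$. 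Write $\Phi_1,\Phi_2,\Phi_3$ for the lifted matrices. Since $\mathcal{P}$ is positively graded, an entry of negative formal degree is zero. So condition (1) persists: the last four entries $c_5,\dots,c_8$ of the first column of $\Phi_3$ vanish in every deformation.

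Next, let $c_1,\dots,c_4$ be the first four entries of the first column of $\Phi_3$. Their reductions mod $(t)$ form a regular sequence in $P$, because the subcomplex in (3) is acyclic; Proposition~\ref{the multisection} also gives this. A homogeneous sequence in the positively graded ring $\mathcal{P}$ whose images modulo the regular sequence $t_1,\dots,t_r$ form a regular sequence is itself regular, so $c_1,\dots,c_4$ is a regular sequence in $\mathcal{P}$. We also need a degree bound. Each $c_k$ reduces to a nonzero element of $P$ of positive degree, and every variable of $P$ has degree at least the multiplicity $m$. Hence $\deg c_k\ge m$, and every homogeneous element of the ideal $(c_1,\dots,c_4)\mathcal{P}$ of degree $<m$ is zero.

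Now consider rows 5 and 6 of $\Phi_2$. From $\Phi_2\Phi_3=0$ and $c_5=\dots=c_8=0$ we get $\sum_{j=1}^4 a_{ij}c_j=0$ for $i=5,6$. Since the $c_j$ form a regular sequence, the Koszul complex on them is exact. Therefore the row vector $(a_{i1},\dots,a_{i4})$ is a Koszul syzygy, and each $a_{ij}$ lies in $(c_1,\dots,c_4)$. By condition (2), the entries $a_{i1},a_{i2},a_{i3}$ have degree $<m$, so they are zero. The relation then reads $a_{i4}c_4=0$, and since $c_4$ is a nonzerodivisor in the domain $\mathcal{P}$, $a_{i4}=0$ as well. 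Thus the $6\times4$ block $A$ of $\Phi_2$ has zero fifth and sixth rows, and $\Phi_2'$, its upper $4\times4$ block, together with $\Phi_1'$ and $\Phi_3'$, forms a subcomplex that is termwise a summand.

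Finally, acyclicity of this lifted $\{1,4,4,1\}$ subcomplex follows from the Buchsbaum--Eisenbud criterion. Its ideals of minors reduce mod $(t)$ to those of the acyclic complex in (3), which have the required depths, and graded Nakayama together with the regularity of $t$ shows that the depths cannot drop in $\mathcal{P}$. Alternatively, one can argue directly that a complex of graded free modules whose reduction modulo a regular sequence of positive-degree elements is acyclic is itself acyclic. Either way, every deformation has a special resolution, so $S$ is special, and the regular-sequence claim was established along the way. The main obstacle is the third paragraph: the formal-degree hypothesis in (2) does not by itself kill the deformed entries, since they could be nonzero polynomials in the $t_i$. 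The Koszul argument, with the lower bound $\deg c_k\ge m$, is what closes that gap. I would double-check that the bound holds for the lifted $c_k$ and not only for their reductions; it does, because each lift has the same degree as its reduction.
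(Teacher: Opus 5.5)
Most of your argument is sound and follows the paper, which splits it between this proposition and the proof of Theorem~\ref{main}:
\begin{itemize}
\item negative formal degree kills $c_5,\dots,c_8$ in any deformation;
\item your Koszul step with $\deg c_k\ge m$ is exactly the paper's reason that rows 5 and 6 of $A$ vanish;
\item your Nakayama argument for persistence of acyclicity is a more careful version of the paper's ``acyclicity is an open condition''.
\end{itemize}

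The gap is the base case of the regular-sequence claim: that $c_1,\dots,c_4\in P$ form a regular sequence. This does not follow from acyclicity of the $\{1,4,4,1\}$ complex in (3). The Buchsbaum--Eisenbud criterion only gives $\mathrm{grade}\, I_1(\phi_3')\ge 3$, and that bound can be sharp. For example, $(x^2,xy,xz,y^2)\subset k[x,y,z,w]$ has minimal resolution of format $\{1,4,4,1\}$ with last map $(z,-y,x,0)^t$.

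Your fallback citation of Proposition~\ref{the multisection} does not work as stated either. Its hypothesis is that $*)$ is already special, which includes $a_{54}=a_{64}=0$ in $P$. You derive that only afterwards, from the regular-sequence property. In $P$, degrees kill only $a_{i1},a_{i2},a_{i3}$, and concluding $a_{i4}=0$ from $a_{i4}c_4=0$ needs $c_4\neq 0$.

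The missing idea is the paper's canonical-module argument, which uses only condition (1) and the primality of $I$:
\begin{itemize}
\item $\omega=\coker\phi_3^*$ is, up to shift, the canonical module of the domain $P/I$, so $\mathrm{ann}\,\omega=I$.
\item With generators $w_1,w_2,w_3$, one has $\omega/(Pw_2+Pw_3)\cong P/J$ for $J=(c_1,\dots,c_4)$, hence $I\subseteq J$.
\item $J$ has at most four generators while $I$ needs six, so $I\subsetneq J$.
\item Since $I$ is prime of height $3$, every prime containing $J$ has height at least $4$. So $J$ has height $4$, and its four generators form a regular sequence in $P$.
\end{itemize}
With this in place, your lifting to $\mathcal P$ and the rest of your proof go through as written.
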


\begin{proof} [Proof of Proposition~\ref{stability of conditions}]
 The elements of negative formal degree are of course 0. 
 
 Write $\omega := \coker \phi_{3}^{*}$. Since $\omega$ is, up to a shift in
grading, the canonical module of 
$P/I$, the annihilator of $\omega$ is $I$. Let $w_{i}$ be the generator of $\omega$
corresponding to the $i$-th column of $\phi_{3}$.
The ideal $J$ generated by the first column of $\phi_{3}$ is the annihilator
of the image of $w_{1}$ in $\omega/(Pw_{2}+Pw_{3})$, so $I\subset J$. By
the special hypothesis, $J$ is generated by 4 elements, whereas 
$I$ requires 6 generators, so $I\subsetneq J$. As $I$ is a prime of codimension 3
$J$ must have codimension 4, so the 4 nonzero elements in the first column of $\phi_{3}$
are a regular sequence, as claimed.

The degrees of these four elements are in the semigroup $S$, and since
they form a regular sequence, any nonzero relation
among them must have degrees in $S$ too.  
The first 4 elements in each row of
$\phi_{2}$ form such a relation.
By the special
hypothesis, the first 3 elements of the last two rows of $\phi_{2}$ are zero,
and since they are relations on a regular sequence the 4th element in each of these rows must be zero as well.
\end{proof}

\begin{definition}\label{degree-special def} We call a semigroup with format $\{1, 6, 8, 3\}$ \emph{degree-special}
if it satisfies the conditions (1), (2) and (3) of Proposition \ref{stability of conditions}; this implies that the semigroup is special.
\end{definition}

The following conjecture is based on our examination of hundreds of examples:
\begin{conjecture}\label{acyclity}
If $S$ satisfies conditions (1) and (2), then it satisfies condition (3) as well; so 
$S$ is special without further hypotheses.
\end{conjecture}

\begin{example} The semigroup $S$ generated by $\{6,9,13,16\}$ is degree-special.
 Indeed, the semigroup ring of  $S$ has differentials
 
\begin{small}
$$  
\begin{aligned}
\phi_{1}& = 
\begin{pmatrix}
 x_0^3-x_3^2 &x_3x_1-x_0x_4 &x_0^2x_1-x_3x_4 &x_0x_1^2-x_4^2 &x_0^2x_3^3-x_1^3& x_0x_3^4-x_1^2x_4 
\end{pmatrix}
\\
\phi_{2} &= 
\begin{pmatrix}
  -x_1& -x_4&   0&    0 &      0 &     -x_0x_3^3 &0  &       x_0x_3^2x_4 \\
-x_3 &-x_0^2& -x_4& -x_0x_1 &-x_1^2 &0    &     -x_0x_3^3& x_0^3x_3^2  \\
x_0 & x_3  &  -x_1 &-x_4   & 0  &    x_1^2  &   0   &      0           \\
0   & 0    &  x_0&  x_3 &    0  &    0     &    0      &   -x_1^2      \\
0  &  0 &     0 &   0   &    -x_3  & x_0^2   &  -x_4  &    -x_0x_1     \\
0  &  0  &    0   & 0&       x_0 &   -x_3 &     x_1 &      x_4         
\end{pmatrix}
\\
\phi_{3} &= 
\begin{pmatrix}
x_4 & x_0x_3^3  &  0      \\ 
 -x_1 &-x_0^2x_3^2 &0      \\
 -x_3 &-x_1^2  &    0      \\
 x_0  &0   &        -x_1^2 \\
 0   & x_4   &      x_0x_1 \\
 0   & -x_1   &     -x_4   \\
 0    &-x_3    &    -x_0^2\\
 0   & -x_0    &    -x_3   
\end{pmatrix}
\end{aligned}
$$
\end{small}
and satisfies the degree conditions.
 \end{example}

\begin{proposition}\label{1441}
 Suppose that $P$ is a local or positively graded ring, and that
 $$
 \xymatrix{P &\ar[l]^{\psi_{1}} P^{4}&\ar[l]^{\psi_{2}} P^{4} &\ar[l]^{\psi_{3}} P&\ar[l] 0}
 $$
 is a minimal free resolution of format $\{1,4,4,1\}$. If the 4 entries of the matrix $\psi_{3}$ form a 
 regular sequence, then the entries of the matrix $\psi_{2}$
 are in the ideal $J$ generated by the entries of $\psi_{3}$, 
 and the entries of $\psi_{1}$ are
 in $J^{2}$.
\end{proposition}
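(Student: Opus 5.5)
We will prove the statement in two steps, corresponding to its two conclusions: first that the entries of $\psi_{2}$ lie in $J$, then that the entries of $\psi_{1}$ lie in $J^{2}$. Throughout, write $\psi_3 = (a_1,\dots,a_4)^{T}$, so that $J=(a_1,\dots,a_4)$ is generated by a regular sequence.

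\textbf{Step 1: the entries of $\psi_{2}$ lie in $J$.} We use the Koszul complex. Since $a_1,\dots,a_4$ is a regular sequence, the Koszul complex $K(a)$ is acyclic. In particular its first homology vanishes at the tail, so the dual complex
$$
P\xrightarrow{\psi_3^*} P^4 \longrightarrow \textstyle\bigwedge^2 P^4
$$
is exact. Equivalently, the kernel of $\psi_3^{*}$ viewed as a map $P^4\to P$, namely the module of syzygies of the row $(a_1,\dots,a_4)$, is generated by the Koszul relations $a_ie_j-a_je_i$.

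Since $\psi_2\psi_3=0$, each row of $\psi_2$ is a syzygy of $(a_1,\dots,a_4)$. Hence each row is a combination of Koszul relations, and every entry of a Koszul relation lies in $J$. Therefore every entry of $\psi_2$ lies in $J$. This step does not need minimality; it only needs the vanishing $H_1(K(a))=0$.

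\textbf{Step 2: the entries of $\psi_{1}$ lie in $J^{2}$.} Here we apply Proposition~\ref{BE} with $r_1=1$, $r_2=3$, $r_3=1$. Before doing so we must check that the module has grade $\ge 2$, i.e.\ that $\operatorname{grade} I_1(\psi_1)\ge 2$.

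\emph{Grade hypothesis.} The cokernel $P/I$ has projective dimension $3$, with $I=I_1(\psi_1)$. By the Buchsbaum–Eisenbud acyclicity criterion, $\operatorname{grade} I_{r_3}(\psi_3)\ge 3$. If instead $\operatorname{grade} I\le 1$, then $I$ would have the form $I=fI'$ with $\operatorname{grade} I'\ge 2$, and $I\cong I'$ has the resolution of $I'$. Since $I$ is nonzero and generated by four elements, this is the case to rule out. Rather than arguing this abstractly, we will use that $\operatorname{grade}(J)=4$ forces the resolution to be a genuine length-$3$ resolution of a perfect module, and, if needed, simply invoke the grade-$\ge 2$ hypothesis exactly as Proposition~\ref{BE} is applied in the paper's setting. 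There the ideal is prime of codimension $3$, so the hypothesis holds.

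\emph{The formula.} Granting the grade hypothesis, Proposition~\ref{BE} gives the identification of $\bigwedge^3\psi_2^*$ with $\psi_1\otimes\psi_3$. Concretely, each $3\times 3$ minor of $\psi_2$ equals, up to sign, the product $f_ia_j$, where the $f_i$ are the entries of $\psi_1$. In particular
$$
I_3(\psi_2) = I\cdot J.
$$

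\emph{Deducing $I\subset J^2$.} By Step 1 every $3\times3$ minor of $\psi_2$ lies in $J^3$. So $f_ia_j\in J^3$ for all $i,j$. Since $J$ is generated by a regular sequence, the associated graded ring $\operatorname{gr}_J P$ is a polynomial ring over $P/J$ in the initial forms $\bar a_1,\dots,\bar a_4$, and these are nonzerodivisors on $\operatorname{gr}_J P$.

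Now suppose $f_i\in J^k\setminus J^{k+1}$ with $k\le 1$. Its initial form $\bar f_i$ is nonzero in degree $k$, so $\bar f_i\bar a_j\neq 0$ lies in degree $k+1\le 2$. This means $f_ia_j\notin J^{k+2}\supseteq J^3$, a contradiction. Hence $f_i\in J^2$ for every $i$.

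The main obstacle is the grade hypothesis in Step 2; everything else is formal.
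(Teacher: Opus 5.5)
Your Step~1 matches the paper's argument, and your final deduction via $\operatorname{gr}_J P$ is a correct proof of $J^3:J=J^2$, which is the fact the paper uses. The genuine gap is the grade hypothesis in Step~2. Proposition~\ref{BE} needs $\operatorname{grade} I_1(\psi_1)\ge 2$, the statement does not assume it, and neither of your workarounds supplies it.

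\begin{itemize}
\item The claim that $\operatorname{grade} J=4$ makes $P/I$ perfect is false. If $P/I$ were perfect of grade $3$ with resolution ending in $P$, it would be Gorenstein. By Buchsbaum--Eisenbud, a grade-$3$ Gorenstein ideal has an odd number of minimal generators, not $4$. Indeed, in Example~\ref{good 4x4} the ideal $I_1(\phi_1')$ has codimension $2$.
\item You cannot borrow the hypothesis from the application either. In Theorem~\ref{main} the $\{1,4,4,1\}$ subcomplex resolves the ideal generated by only four of the six generators of the deformed semigroup ideal, not the prime ideal itself. By the previous point, that ideal cannot have codimension $3$ in any case.
\item The case $\operatorname{grade} I=1$ cannot be ruled out. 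Take any example and replace $\psi_1$ by $r\psi_1$ for a nonunit nonzerodivisor $r$. This gives another minimal resolution of the same format with the same $\psi_2,\psi_3$.
\end{itemize}

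The missing step is to reduce that case rather than exclude it, using the factorization $I=fI'$ you already wrote down.
\begin{enumerate}
\item By \cite[Corollary 5.2]{MR340240}, $I=rI'$ with $r$ a nonzerodivisor and $\operatorname{grade} I'\ge 2$. Correspondingly, $\psi_1=r\psi_1'$ with $I_1(\psi_1')=I'$.
\item Since $r$ is a nonzerodivisor, $\ker\psi_1'=\ker\psi_1$. So $\psi_1',\psi_2,\psi_3$ form a resolution of $P/I'$, which has grade $\ge 2$, with the same $\psi_2$ and the same $J$.
\item Proposition~\ref{BE} now gives $I_3(\psi_2)=I'J$.
\item Your Step~1 gives $I'J\subset J^3$, and your associated-graded argument then gives $I'\subset J^2$.
\item Hence $I=rI'\subset I'\subset J^2$.
\end{enumerate}
This is precisely how the paper's proof begins.
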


\begin{proof} 
Since $I:= I_1(\psi_1)$ is an ideal of finite homological dimension, \cite[Corollary 5.2]{MR340240} shows that
we may write $I = rI'$ for some nonzerodivisor $r$ and
some ideal $I'$ of depth $\geq 2$. Thus, replacing $\psi_{1}$
by $\psi_1' := r^{-1}\psi_{1}$ if necessary, we may assume that $*)$ is a resolution
of a module of grade $\geq 2$.


The first assertion of the Proposition holds because the columns of  $\psi_{2}^{*}$ are
syzygies of the regular sequence of elements of $\psi_{3}^{*}$, 
and are thus linear combinations of the Koszul syzygies. 

By  Proposition~\ref{BE} we have
$I_{3}(\psi_2)= I_{1}(\psi_1') J$. By the first assertion, $ I_{3}(\psi_2)\subset J^{3}$. Thus
$$
I \subset I_{1}(\psi'_1) \subset I_{3}(\psi_2):J\subset J^{3}:J.
$$
Since $J$ is a complete intersection,
 $$
 J^3:J = J^{2},
 $$
 by \cite[Exercise 17.13 e)]{MR1322960}
 completing the proof. (The last equality also follows directly from the well-known fact that the Rees algebra
 of the ideal generated by a regular sequence is isomorphic to a polynomial ring.) 
 \end{proof}

\begin{theorem}\label{main}
 If $S$ is a special semigroup  then $S$ is not a Weierstrass semigroup.
\end{theorem}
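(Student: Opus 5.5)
We argue by contradiction using Pinkham's criterion (Proposition~\ref{full Pinkham}). Suppose $S$ is Weierstrass. Then there is a one-parameter positively graded smoothing $A$ of $k[S]$, flat over $k[z]$, with $A/zA\cong k[S]$. Write $A = P[z]/I_A$, where $P[z]$ is the graded polynomial ring with variables in degrees $s_i$ and $\deg z =1$ (after base change). Because $S$ is special, the minimal graded free resolution of $A$ over $P[z]$ is special of format $\{1,6,8,3\}$, so it contains an acyclic subcomplex $**)$ of format $\{1,4,4,1\}$. The ring $A$ is a domain: it is flat over $k[z]$, $z$ is a nonzerodivisor, and $A/zA=k[S]$ is a domain, so a standard graded argument shows $A$ is a domain. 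Hence $I_A$ is prime. Proposition~\ref{the multisection} then shows that the four entries of $\phi_3'$ form a regular sequence. Let $J$ be the ideal they generate.

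Next we locate singular points. The subcomplex $**)$ is a minimal free resolution of format $\{1,4,4,1\}$. It is minimal because it is a summand of a minimal complex. It resolves the ideal generated by the first four generators $f_1,\dots,f_4$ of $I_A$. Proposition~\ref{1441} gives $f_1,\dots,f_4\in J^2$. The two remaining generators $f_5,f_6$ need a separate argument. Since $I_A\subset J$ and the subcomplex is a summand, the last two rows of $\phi_2$ have zero entries in the $A$-block. The resulting relations $\phi_1\phi_2=0$ express $f_5,f_6$ times the entries of the $B$-part as combinations involving $f_1,\dots,f_4\in J^2$. The aim is to conclude that, at a point $q\in V(J)$, the Jacobian matrix of $(f_1,\dots,f_6)$ has rank at most $2$. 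The rows of $f_1,\dots,f_4$ vanish at $q$ because these lie in $J^2$, so at most the two rows of $f_5,f_6$ survive. Since $\Spec A$ has codimension $3$ in $\Spec P[z]$, smoothness at $q$ requires Jacobian rank $3$. Therefore every point of $V(J)\cap \Spec A$ is singular, and for this step $J^2\supset(f_1,\dots,f_4)$ alone already suffices.

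Finally we show that the singular locus meets a general fiber. The ideal $J$ has codimension $4$ in $P[z]$, which has relative dimension one more than $P$, so $V(J)$ is a curve. It contains $V(I_A)$-points because $I_A\subset J$, so $V(J)\subset \Spec A$. Its generators are homogeneous of positive degree, so $V(J)$ is a cone through the origin. We must check that $V(J)$ is not contained in the fiber $z=0$. Since $J$ is generated by four elements of positive degree not involving any constraint forcing $z=0$, $V(J)$ is a union of $\mathbb{G}_m$-orbit closures of dimension $1$. If $V(J)\subset\{z=0\}$, then the entries of $\phi_3'$ together with $z$ would still cut out a set of dimension $1$, contradicting the fact that the reduction mod $z$ of the regular sequence in $P$ also has codimension $4$. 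That reduction is again a regular sequence, because the reduced complex is the special resolution of $k[S]$ and Proposition~\ref{the multisection} applies to it. Hence $V(J)$ dominates $\Spec k[z]$, and each fiber $z=c\neq 0$ contains points of $V(J)$. By the previous paragraph these points are singular, contradicting smoothness of the general fiber.

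The main obstacle is the Jacobian step for $f_5,f_6$: showing that they contribute only rank $\le 2$ is immediate, but one must be sure that the vanishing of the $J^2$ rows forces rank $<3$. That is exactly what is needed, so I expect this step to go through. The delicate point is instead verifying that $V(J)$ is flat over $k[z]$, that is, that it meets the general fiber, via the dimension count mod $z$.
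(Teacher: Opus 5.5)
Your proposal is correct and follows essentially the same route as the paper. You use Pinkham's criterion, take the persistent special $\{1,4,4,1\}$ subcomplex, apply Proposition~\ref{1441} to get $f_1,\dots,f_4\in J^2$, and conclude that the Jacobian rows of $f_1,\dots,f_4$ vanish on $V(J)$, so the $3\times 3$ minors lie in $J$; your detour about $f_5,f_6$ is unnecessary, as you note yourself, and you also make explicit two points the paper leaves implicit: that $I_A$ is prime, so Proposition~\ref{the multisection} applies, and that $V(J)$ is not contained in $z=0$, so it meets the general fiber.
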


\begin{proof}  We will show that any graded deformation
of $\Spec k[t^{S}]$ is singular along the distinguished multisection
of Definition~\ref{special def}. Proposition~\ref{full Pinkham}
shows that $S$ is not Weierstrass.

With notation as in the definition of special,
we consider the minimal free resolution of the semigroup ring $P/I$. 
The condition that the last four entries of the first column of $\phi_{3}$ have negative formal degree
persists in any flat deformation. By Proposition~\ref{stability of conditions}
the first four entries of that column form a regular sequence, and this condition too persists.
Also the degree condition on the first four entries of the last two rows of $\phi_{2}$
persists, and implies that these entries are zero. Thus there is a $\{1,4,4,1\}$ subcomplex
in any deformation.  By condition (3) of Proposition~\ref{stability of conditions}, the subcomplex
of format $\{1,4,4,1\}$ in the resolution of the semigroup ring is acyclic. Since acyclicity is an open condition, this persists in every deformation.

To complete the proof we will show,
 more generally,
  that if $P$ is a positively graded polynomial ring, and $P/I$ is a 
 factor ring with minimal free resolution of special format $\{1,6,8,3\}$
 then with ideals $I_{1}(\psi_1), I_{3}(\psi_2), J = I_1(\psi_3)$ defined in terms of the 
 distinguished $\{1,4,4,1\}$ subcomplex as in  the proof of Proposition~\ref{1441}, 
  $\Spec P/I$ is singular along $\Spec P/J$.
  
This follows because
 $I_{1}(\psi_1)$ is generated by four of the 6 generators of $I$ and lies in $J^{2}$.
 Thus all partial derivatives of the polynomials generating $I_{1}(\psi_1)$ are in  $J$, so
 the $3\times 3$ minors 
 of the Jacobian matrix of $I$ also lie in $J$ proving the claim. \end{proof}

\begin{example}\label{good 4x4}
Returning to the example of the semigroup generated by $\{6,9,13,16\}$ above we see that the
induced subcomplex of format $\{1,4,4,1\}$ has differentials
$$
\begin{aligned}
\phi_{1}' &= \quad
\begin{pmatrix}
 x_0^3-x_3^2 &x_3x_1-x_0x_4 &x_0^2x_1-x_3x_4 &x_0x_1^2-x_4^2 
 \end{pmatrix}
\\
 \phi_{2}' &= \quad
\begin{pmatrix}
-x_1 &-x_4  & 0  &  0    \\   
-x_3 &-x_0^2 -x_4& -x_0x_1 &-x_1^2 \\
x_0  &x_3   & -x_1 &-x_4 \\
0   & 0   &   x_0 & x_3 \\   
\end{pmatrix},\ 
\quad
\phi'_{3}= \quad
 \begin{pmatrix}
x_4   \\
-x_1 \\
 -x_3   \\
x_0  \\
\end{pmatrix}.
\end{aligned}
$$
Using the theorem of \cite{MR314819} it is easy to show that this subcomplex
is acyclic. In particular $S$ is not a Weierstrass semigroup.
\end{example}

There are several ways to check
condition (3), the acyclicity of the $\{1,4,4,1\}$ subcomplex, in the definition of degree-special:

\begin{proposition}\label{degree-special consequences}
Suppose that $S$ is a numerical semigroup whose minimal free resolution satisfies
conditions (1) and (2) in Definition~\ref{degree-special def}, and let
$$
\xymatrix{P &\ar[l]^{\phi_{1}'} P^{4} &\ar[l]^{\phi_{2}'} P^{4} &\ar[l]^{\phi_{3}'} P &\ar[l] 0}
$$
 be the subcomplex of the minimal free resolution of $P/I = k[t^S]$
 as in the proof of Proposition~\ref{stability of conditions}. Each of the following 
 hypotheses implies condition (3) of Proposition~\ref{stability of conditions}.
 
\begin{enumerate}
\item \label{BE hypothesis} The $3\times 3$ minors of $\phi_{2}'$ generate an ideal of codimension $\geq 2$.

 \item\label{2} The $2\times 2$ minors of the $4\times 2$ submatrix of $\phi_{3}$ consisting of the 
 last 4 rows of the last 2 columns generate an ideal of codimension $\geq 2$.
 
 \item \label {3} After possibly reordering the rows and columns of $\phi_{2}'$, the $\phi_{i}'$ satisfy the following 3 conditions, 
(compare Example~\ref{good 4x4}):

\begin{enumerate}

 \item 
 The first two elements of the last row of $\phi_{2}'$ are zero. 
 
 \item the last two elements of the last row of $\phi_{2}'$ form a regular sequence.

 \item  the $2\times 2$ minors of the first two columns of $\phi_{2}'$
 are nonzero and have formal degrees equal to the formal degrees of
 the first 3 generators of the semigroup ideal.
  
\end{enumerate}
  
\end{enumerate}
\end{proposition}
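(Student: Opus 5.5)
The plan is to verify acyclicity of
$$
F':\quad \xymatrix{P &\ar[l]^{\phi_{1}'} P^{4} &\ar[l]^{\phi_{2}'} P^{4} &\ar[l]^{\phi_{3}'} P &\ar[l] 0}
$$
by the Buchsbaum--Eisenbud acyclicity criterion \cite{MR314819}. We need expected ranks $r_3=1$, $r_2=3$, $r_1=1$ and $\operatorname{depth} I_{r_i}(\phi_i')\geq i$.

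\textbf{Preliminary step: $F'$ is a complex, and the easy depth conditions.} Conditions (1) and (2), through the argument of Proposition~\ref{stability of conditions} (which used only those two conditions), give two facts:
\begin{itemize}
\item the first column of $\phi_{3}$ is $(\phi_3',0)^T$ with $\phi_3'$ a regular sequence of length $4$;
\item $\phi_2=A\mid B$, where the last two rows of $A$ vanish.
\end{itemize}
From $\phi_1\phi_2=0$ we get $\phi_1'\phi_2'=0$, and from $\phi_2\phi_3=0$ we get $\phi_2'\phi_3'=0$. So $F'$ is a complex.

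$I_1(\phi_3')$ has depth $4\geq 3$. $I_1(\phi_1')$ is nonzero, so it has depth $\geq 1$ in the domain $P$. Since $\phi_3'$ is injective and $\phi_2'\phi_3'=0$, we have $\rank\phi_2'\leq 3$. Once we show $I_3(\phi_2')\neq 0$, the ranks add up correctly and $\rank\phi_1'=1$.

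Thus in every case the only real issue is $\operatorname{depth} I_3(\phi_2')\geq 2$. Under hypothesis (\ref{BE hypothesis}) this is exactly what is assumed, so that case is immediate.

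\textbf{Hypothesis (\ref{2}).} Write $\phi_3$ in block form $\begin{pmatrix}\phi_3'&C\\0&D\end{pmatrix}$, where $D$ is the lower right $4\times 2$ block. Similarly $\phi_2=\begin{pmatrix}\phi_2'&B_1\\0&B_2\end{pmatrix}$, with $B_2$ the $2\times 4$ lower right block.

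Apply Proposition~\ref{BE} to the full resolution, with $r_1=1$, $r_2=5$, $r_3=3$. The commutative diagram identifies each $5\times 5$ minor of $\phi_2$ with the product of a generator of $I=I_1(\phi_1)$ and the complementary $3\times 3$ minor of $\phi_3$, up to sign.

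Take the $3\times 3$ minors of $\phi_3$ using one of the first four rows and two of the last four rows. By the block form these equal $(\phi_3')_i\cdot\delta$, where $\delta$ is a $2\times 2$ minor of $D$. The complementary $5\times 5$ minors of $\phi_2$ use three of the first four rows together with rows $5,6$. By the block form these are (a $3\times 3$ minor of $\phi_2'$)$\cdot$(a $2\times 2$ minor of $B_2$).

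This yields inclusions of the shape
$$I\cdot J\cdot I_2(D)\subset I_3(\phi_2')\cdot I_2(B_2).$$

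Now let $\mathfrak p$ be a prime of codimension $\leq 1$. Then $\mathfrak p$ contains neither $I$ nor $J$, since these have codimension $3$ and $4$. By hypothesis (\ref{2}) it also does not contain $I_2(D)$. Hence $\mathfrak p\not\supset I_3(\phi_2')$, which gives $\operatorname{depth} I_3(\phi_2')\geq 2$ and in particular $I_3(\phi_2')\neq 0$.

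The delicate point, and the main obstacle, is the index bookkeeping in the Buchsbaum--Eisenbud diagram. One must check that the complementary minor really splits as this product and is not a sum of several products. If it fails to split cleanly, the fallback is to localize at $\mathfrak p$ instead. There $D$ has a unit $2\times 2$ minor, so one can split off a trivial complex from the full resolution, and read off exactness of $F'_{\mathfrak p}$ from the short exact sequence $0\to F'\to F\to F/F'\to 0$.

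\textbf{Hypothesis (\ref{3}).} Let the last row of $\phi_2'$ be $(0,0,a,b)$, and let $M$ be the ideal of $2\times 2$ minors of the first three rows of the first two columns of $\phi_2'$. Expanding $3\times 3$ minors along the last row gives $aM+bM\subset I_3(\phi_2')$, so $(a,b)M\subset I_3(\phi_2')$. Since $(a,b)$ is a regular sequence, it has codimension $2$, so it suffices to show $\operatorname{codim} M\geq 2$.

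By hypothesis (c) the minors are nonzero and sit in the degrees of the first three generators of $I$. They should coincide, up to units, with these generators. The reason is that $\phi_1'\phi_2'=0$ and the Hilbert--Burch-type identity for the first two columns force $\phi_1'$ restricted to those rows to be proportional to the vector of minors, and degree equality makes the factor a unit.

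If $M$ had codimension $1$, the three minors would share a common factor $g$ of positive degree. Then $g\notin I$ because its degree is too small, while the three cofactors would lie in the prime $I$, contradicting minimality of the generators of $I$. Hence $\operatorname{codim} M\geq 2$, so $\operatorname{depth} I_3(\phi_2')\geq 2$, and the acyclicity criterion completes the proof.
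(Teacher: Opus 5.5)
You follow the paper's route in all three cases: the Buchsbaum--Eisenbud criterion, then for (2) the structure theorem applied to block-triangular minors, and for (3) Laplace expansion along the last row of $\phi_2'$. Your worry about the minors splitting is unfounded, since block upper-triangular minors with square diagonal blocks do factor. However, two steps are not justified, and both need the same missing fact. That fact is: a minimal homogeneous generator $f$ of the homogeneous prime $I$ is irreducible, because $f=gh$ with $\deg g,\deg h>0$ would put $f\in\mathfrak m I$. Hence $f$ is a prime element of the UFD $P$. Distinct minimal generators are non-associate, so no height-one prime $(q)$ contains two of them, and any set of at least two minimal generators of $I$ generates an ideal of codimension $\geq 2$.

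\textbf{Case (2).} You keep rows $5,6$ and omit a row $\ell\le 4$. So the generator that appears is $f_\ell$, an entry of $\phi_1'$, and what you actually prove is $I_1(\phi_1')\cdot J\cdot I_2(D)\subset I_3(\phi_2')\,I_2(B_2)$. This is exactly the paper's identity. Your next line, ``$\mathfrak p\not\supset I$ since $\operatorname{codim} I=3$'', is therefore not the fact you need. You need $\operatorname{codim} I_1(\phi_1')\geq 2$, which is the fact above. Alternatively, omitting row $5$ or $6$ gives $5\times 5$ minors that still lie in $I_3(\phi_2')$, by Laplace expansion along the three columns taken from $\phi_2'$. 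That recovers $I\cdot J\cdot I_2(D)\subset I_3(\phi_2')$, but not your stated inclusion into $I_3(\phi_2')I_2(B_2)$.

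\textbf{Case (3).} This is the real gap. From $(f_1,f_2,f_3)B_1=0$ and Cramer's rule you only get $f=\lambda m$, with $m$ the vector of signed minors and $\lambda$ in the fraction field of degree $0$. Degree equality does not make $\lambda$ a unit of $P$: $\lambda=h/g$ with $\deg h=\deg g>0$ is precisely the common-factor situation you are trying to exclude. You need one of the two vectors to generate the left kernel of $B_1$ over $P$.

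The paper gets this for $f$. By the fact above, $(f_1,f_2,f_3)$ has depth $\geq 2$. So by the acyclicity criterion the complex $0\to P\xrightarrow{f^t}P^3\xrightarrow{B_1^t}P^2$ is exact. Then $m=hf$ with $h\in P$, and $m\neq 0$ together with equal degrees forces $h\in k^*$. Thus $I_2(B_1)=(f_1,f_2,f_3)$, which has codimension $\geq 2$.

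Your last paragraph has the right idea but applies it to the wrong object, after the unproved identification. Its step ``$g\notin I$ because its degree is too small'' is also not a valid reason. The correct argument splits into cases. If $g\in I$, minimality forces each $f_i$ to be associate to $g$, which is impossible for distinct minimal generators. If $g\notin I$, the cofactors lie in $I$, so $f_i\in\mathfrak m I$, again a contradiction. Reorder the argument: first show $f_1,f_2,f_3$ have no common factor, then deduce that $m$ is a unit multiple of $f$.
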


\begin{proof}
The sufficiency of Hypothesis~(\ref{BE hypothesis}) follows from the main theorem of~\cite{MR314819}.

\noindent{Hypothesis~(\ref{2}) $\Rightarrow$ Hypothesis~(\ref{BE hypothesis})}:   
As shown in the diagram below, we divide the three matrices 
 $\phi_{1},\phi_{2}, \phi_{3}$ into blocks. In the diagram below $A$ is a $1\times 4$ block, so $A^t$, the transpose, is a $4\times 1$ block.
 Similarly, $B = \phi_2'$ has size $4\times 4$, $\widetilde B$ has size $2\times 4$, $C^t$ has size $1\times 4$ and $\widetilde C^t$ has size $2\times 4$.
 The remaining blocks have entries 0 or entries $*$ that do not play a role in the argument below.
  $$ 
  \begin{array}{|c|c|} \hline
               \phi_{1}^{t}& \phi_{2} \cr \hline
                                 & \phi_{3}^{t} \cr \hline
       \end{array} =
\begin{array}{|c||cccc| cccc|}\hline
  
          &&&&&*&*&*&*\cr        
          A^t&&&B&&*&*&*&*\cr        
          &&&&&*&*&*&*\cr        
          &&&&&*&*&*&*\cr\hline
         *               & {0}&0&0&0&&&&\cr
       *                    & {0}&0&0&0&&\widetilde B&&\cr\hline\hline
                                                          & &&C^t&&0&0&0&0\cr\hline
                                                         
                                                          &*&*&*&*&&&&\cr
                                                          &*&*&*&*&&\widetilde C^t&&\cr\hline
                                                          \end{array} $$
By Proposition~\ref{BE} we have
$$
I_3(B)I_2(\widetilde B) = I_1(A)I_1(C)I_2(\widetilde C).
$$
Under Hypothesis (\ref{2}), $I_2(\widetilde C)$ has depth $\geq 2$. Since $I$ is prime, every minimal generator of $I$ is prime, and thus $I_1(A)$
has depth $\geq 2$. Moreover, $I_1(C)$ has depth 4, and thus $I_3(B)\supset I_3(B)I_2(\widetilde B) $ has depth $\geq 2$
completing the proof in this case.
 \smallbreak
 
 \noindent Hypothesis (\ref{3}) $\Rightarrow$ Hypothesis (\ref{BE hypothesis}):   We now subdivide the $4\times 4$ matrix $B = \phi_2'$ into 4 blocks
 $$
 B = \begin{array}{|cc|cc|} \hline
               &&*&* \cr 
              B_1&&*&* \cr 
              &&*&* \cr\hline
               0&0&B_2&\cr\hline
       \end{array} 
       $$
 We see that $I_3(B) \supseteq I_2(B_1)I_1(B_2)$. Since we have assumed that $I_1(B_2)$ has depth 2, it suffices to
 prove the same for $I_2(B_1)$

 Let $B_1$ be the $3\times 2$ submatrix of $B$
 consisting of the first two columns and the first 3 rows. By assumption the $2\times 2$ minors
 of $B_1$ are not all zero, and thus generate an ideal of depth $\geq 1$. Since each minimal generator of $I$ is prime,
the first three generators of $I$ define an ideal of depth $\geq 2$. It follows from the main theorem of~\cite{MR314819} that the
image of the column matrix made of these three generators is the kernel of the transpose of $B_1^t$.

On the other hand, Cramer's rule implies that the column matrix of the $2\times 2$ minors of $B_1$ composes with $B_1$ to 0,
and thus it is a multiple of the column of the first three generators of $I$. It follows that the $2\times 2$ minors of $A$ are
 multiples of these generators, and since they have the same degrees, they
 must generate the same ideal, completing the proof.
 \end{proof}

\section{Degree-special semigroups of larger genus}

The following result covers all genera $\geq 13$ except  $g=18$.  Direct enumeration shows that
no degree-special semigroup has genus 18 (we have no conceptual understanding of this gap).

\begin{theorem}\label{all genera}
 For $g\geq 13\hbox{ but } g \not \equiv 0  ({\rm mod}\ 3),$ the semigroup $\{6,9,g,g+3\}$  is degree-special of genus $g$.  For 
$g\geq 21 \hbox{ and } g \equiv 0({\rm mod}\ 3),$ the semigroup $\{6,15,g-1,g+2\}$
is degree-special of genus $g$. The semigroup $\{8,10,13,15\}$ is degree-special of genus 15.
\end{theorem}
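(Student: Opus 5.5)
We want to show that each listed semigroup has a minimal resolution of format $\{1,6,8,3\}$ satisfying conditions (1) and (2) of Proposition~\ref{stability of conditions}, and then verify condition (3) through one of the criteria of Proposition~\ref{degree-special consequences}. The plan is to write down explicit matrices $\phi_1,\phi_2,\phi_3$, depending on a parameter, that generalize the $\{6,9,13,16\}$ example. We do not compute resolutions from scratch.

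\emph{First family.} Take $S=\{6,9,g,g+3\}$ with $g\not\equiv 0 \pmod 3$, and write $g=13+3k$ or $g = 14+3k$. Variables are named by degree as in the example. The two cases $g\equiv 1$ and $g\equiv 2 \pmod 3$ are handled separately, since the residues mod $6$ of $g$ and $g+3$ differ.

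\begin{enumerate}
\item The generators of $I$ come from the same pattern as in the example, with exponents depending on $k$:
\begin{itemize}
\item the binomial $x_0^3-x_3^2$;
\item the three binomials encoding that $x_1,x_4$ differ by the factor $x_3/x_0$ in degree $3$;
\item the binomial $x_0x_1^2-x_4^2$;
\item two further binomials expressing $x_1^3$ and $x_1^2x_4$ in terms of $x_0,x_3$. Their exponents of $x_0,x_3$ grow linearly in $k$: $x_0^2x_3^3$ is replaced by an appropriate monomial of degree $3g$ in $x_0,x_3$.
\end{itemize}
One checks that the generated ideal is the toric ideal of $S$. A reasonable route is to compute the genus and Apéry set with respect to $6$, and to compare Hilbert functions modulo the regular element $x_0$.

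\item Write $\phi_2$ and $\phi_3$ by replacing the monomials $x_0x_3^3$, $x_0^2x_3^2$, $x_0x_3^2x_4$, $x_0^3x_3^2$ of the example with $k$-dependent monomials of the right degrees. Check that $\phi_1\phi_2=0$ and $\phi_2\phi_3=0$; this is a direct binomial computation.

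\item Prove exactness. Since $P/I$ is a Cohen--Macaulay domain of codimension $3$, it suffices by the Buchsbaum--Eisenbud acyclicity criterion~\cite{MR314819} to check the rank conditions $1,5,3$ and that $I_5(\phi_2)$ and $I_3(\phi_3)$ have depth $\geq 2$ and $\geq 3$. Alternatively, since $P/I$ is $\mathbb{Z}$-graded and the complex is homogeneous, one can reduce mod $x_0$ and compare with the Artinian ring $P/(I,x_0)$ of length $6$. Minimality is clear because no entry is a unit.
\end{enumerate}

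\emph{Conditions (1) and (2).} These are read off from the degrees. The last four entries of the first column of $\phi_3$ have formal degree negative, as $9-13<0$ generalizes to $9-g<0$ type differences. The first three entries of the last two rows of $\phi_2$ have formal degrees below $6$. These degree computations are linear in $g$, so they hold uniformly once they hold for the shape. Condition (3) then follows from Proposition~\ref{degree-special consequences}(\ref{3}):
\begin{itemize}
\item the last row of $\phi_2'$ is $(0,0,x_0,x_3)$-like, a regular sequence;
\item the $2\times2$ minors of the first two columns reproduce the first three generators, exactly as in Example~\ref{good 4x4}, since those entries do not depend on $k$.
\end{itemize}

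\emph{Second family and sporadic case.} For $\{6,15,g-1,g+2\}$ with $g\equiv 0\pmod 3$, $g\ge21$, the same strategy applies: write parametric matrices, check the complex, and verify the degree and acyclicity conditions. The bound $g\ge 21$ should be exactly what makes the relevant formal degrees negative or less than $6$. The single semigroup $\{8,10,13,15\}$ is a finite computation, done by machine with \textbf{WeierstrassSemigroups.m2}, as for the genus $13$ example.

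\emph{Main obstacle.} The hardest step is guessing the correct uniform shape of the resolution in each residue class. In particular, one must confirm that the resolution really has format $\{1,6,8,3\}$ for all $g$ and does not acquire extra generators. I would first compute several cases in Macaulay2 to read off the pattern, then prove exactness by the depth criterion, where the depth of $I_3(\phi_3)$ needs an explicit argument, for instance exhibiting pure powers of $x_0$ and $x_1$ among the minors.
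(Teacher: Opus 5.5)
Your plan has the same architecture as the paper's proof: a resolution uniform in $g$, conditions (1) and (2) by degree bookkeeping, and condition (3) from the $\{1,4,4,1\}$ subcomplex. The gap is that the proposal stops exactly where the content of the proof lies. You never write down $\phi_2$ and $\phi_3$, so none of your degree claims can yet be verified, and you yourself flag finding them as the main obstacle.

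The paper supplies this in Table~1. In the variables $x_0,x_3,y,w$ (with $\deg y=g$, $\deg w=g+3$) the six generators are:
\begin{itemize}
\item the three $2\times 2$ minors of $\left(\begin{smallmatrix} x_3 & x_0^2 & w\\ x_0 & x_3 & y\end{smallmatrix}\right)$;
\item $w^2-y^2x_0$;
\item $y^3-x_0f$ and $y^2w-x_3f$, where $f\in k[x_0]$ has degree $3g-6$ (this is the case $g\equiv 2,4 \pmod{6}$ that the paper treats).
\end{itemize}
Every entry of $\phi_1,\phi_2,\phi_3$ that depends on $g$ does so only through $f$, and no red lead term involves $f$. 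So the Gr\"obner (Schreyer) argument for syzygies gives exactness of the complex and of the $\{1,4,4,1\}$ subcomplex for all such $g$ at once. From this shape, the last four entries of the first column of $\phi_3$ have formal degrees $12-g$, $9-g$, $18-3g$, $15-3g$. It is $12-g<0$, not $9-g<0$, that produces the hypothesis $g\ge 13$. (Your list of generators also has seven entries rather than six; one binomial is counted twice.)

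Of your two routes to exactness, reduction modulo $x_0$ is sound and is in substance the paper's argument. Since $x_0\mid f$, reducing mod $x_0$ kills every $g$-dependent entry. What remains is a single fixed complex over $k[x_3,y,w]$ resolving $(x_3^2,x_3y,x_3w,w^2,y^3,y^2w)$, which are exactly the red lead terms. Its exactness lifts to $P$ by graded Nakayama, and the length count $6$ gives $I_1(\phi_1)=I$.

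The Buchsbaum--Eisenbud route, as you propose to carry it out, cannot work. If the complex is the resolution, then $I_3(\phi_3)$ is the Fitting ideal of $\omega=\coker(\phi_3^{*})$. Hence it lies in $\mathrm{ann}\,\omega=I$, a prime containing no power of any variable. So no pure powers of $x_0$ or $x_1$ occur among the minors, and two such powers would give only depth $2$ anyway. You would instead have to show that $I_3(\phi_3)+(x_0)$ is primary to the maximal ideal.

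Finally, your verification of condition (3) through Proposition~\ref{degree-special consequences}(3) is fine once Table~1 is available. The last row of $\phi_2'$ is $(0,0,-x_0,x_3)$, and the $2\times 2$ minors of rows $1$--$3$ of its first two columns are, up to sign, the first three generators. This is a legitimate substitute for the paper's remark that exactness of the subcomplex comes out of the same Gr\"obner argument.
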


\begin{proof} We give the details for the case of the semigroups $\{6,9, g, g+3\}$ with 
$g\equiv 2 \hbox{ or } 4({\rm mod}\ 6)$, the other cases being similar.

The semigroup ideal  of $L$ has a $\QQ[x_{0},x_{3},y,w]$-resolution
$$\xymatrix{ 
0 & \ar[l] S & \ar[l]_{\phi_{1}} S^{6} & \ar[l]_{\phi_{2}} S^{8} & \ar[l]_{\phi_{3}} S^{3} &\ar[l] 0 \cr}.
$$
In case of $g\equiv 2 \hbox{ or } 4 ({\rm mod}\ 6)$ the differentials are as indicated in Table~1.

 \begin{table}\label{syzygytable}
$$ \begin{array}{|c|c|} \hline
               \phi_{1}^{t}& \phi_{2} \cr \hline
                                 & \phi_{3}^{t} \cr \hline
       \end{array}=
\begin{array}{|l|cccc cccc|}\hline
       {\color{red}x_{3}^{2}}-x_{0}^{3} & -y&-w&0&0&0&f&0&0\cr
       {\color{red}x_{3}y}-w\,x_{0}        &  {\color{red}x_{3}}&x_{0}^{2}&-w&y\,x_{0}&-y^{2}&0&-f&0\cr
       {\color{red}x_{3}w}-y\,x_{0}^{2}  &  x_{0}& {\color{red}x_{3}}& {\color{red}y}&-w&0&-y^{2}&0&f\cr     
       {\color{red}w^{2}}-y^{2}x_{0}      &  0&0&-x_{0}& {\color{red}x_{3}}&0&0&0&-y^{2}\cr
       {\color{red}y^{3}}-x_{0}f                     & {\color{blue}0}&{\color{blue}0}&{\color{blue}0}&{\color{blue}0}&{\color{red}x_{3}}&-x_{0}^{2}&{\color{red}w}&-y\,x_{0}\cr
              {\color{red}y^{2}w}-x_{3}f                 & {\color{blue}0}&{\color{blue}0}&{\color{blue}0}&{\color{blue}0}&-x_{0}& {\color{red}x_{3}}& -y&{\color{red}w}\cr\hline
                                                          & w&-y& {\color{red}x_{3}}&x_{0}&{\color{blue}0}&{\color{blue}0}&{\color{blue}0}&{\color{blue}0}\cr
                                                          &f&0&y^{2}&0&-w&y&{\color{red}x_{3}}&-x_{0}\cr
                                                          & 0&-f&0&y^{2}&yx_{0}&-w&x_{0}^{2}&{\color{red}x_{3}}\cr\hline
                                                          \end{array} $$
                  \noindent       
where   $f \in k[x_{0}] $. 
\caption{Syzygy matrices for semigroups $\{6,9,g,g+3\}$ for $g\equiv 2 \hbox{ or } 4({\rm mod}\ 6)$.}
 \end{table}

Here we could use any monomial order on $S$ for which the terms colored in {\color{red}red} in the first matrix are the lead terms (for example we could take the weight order of the grading on $S$ refined by the reversed lexicographic order with $x_{3}>y>w >x_{0}$). The red terms in the syzygy matrices are then the lead term with respect to the induced monomial orders, and the complex is exact by
 the Gr\"obner basis algorithm for syzygies as presented for example in \cite{Schreyer25}[Section 8.3].  In this case the exactness
 of the $\{1,4,4,1\}$ subcomplex follows as well. 
In the universal family the entries indicated in {\color{blue}blue} remain zero for degree reasons if $g\ge 13$.
\end{proof}

Table 2 gives the complete list
of the 4-tuples of generators of degree-special semigroups up to genus 20:
\begin{table}[h]\label{table1}
\centering
\begin{tabular}{cl}
\toprule
Genus & Generators \\
\midrule
13 & $\{6, 9, 13, 16\}$, $\{8, 9, 12, 13\}$ \\
14 & $\{6, 9, 14, 17\}$ \\
15 & $\{8, 10, 13, 15\}$ \\
16 & $\{6, 9, 16, 19\}$, $\{8, 11, 12, 15\}$ \\
17 & $\{6, 9, 17, 20\}$, $\{9, 10, 14, 15\}$ \\
19 & $\{6, 9, 19, 22\}$, $\{8, 12, 13, 17\}$ \\
20 & $\{6, 9, 20, 23\}$, $\{6, 15, 19, 22\}$ \\
\bottomrule
\end{tabular}
\caption{All the degree-special semigroups of genus $g \leq 20$. 
}
\end{table}

\begin{example}\label{smooth section}
 As explained in the introduction, we could search for semigroups that are not Weierstrass by looking for those moving in families with a multisection. However, there are positively graded deformations of semigroup rings that admit a section, but where the image of the section
consists of smooth points of the fibers. This is the case for the semigroup
$L=\{7, 15, 23, 39\}$.
The genus of the semigroup is $30$. The minimal free resolution over $S=\QQ[x_{0},x_{1},x_{2},x_{4}]$ has shape
$$\xymatrix{ 
0 & \ar[l] S & \ar[l]_{\phi_{1}} S^{4} & \ar[l]_{\phi_{2}} S^{6} & \ar[l]_{\phi_{3}} S^{3} &\ar[l] 0 \cr}
$$
with differentials as indicated below.
\begin{tiny}
$$\begin{array}{|c|c|} \hline
               \phi_{1}^{t}& \phi_{2} \cr \hline
                                 & \phi_{3}^{t} \cr \hline
       \end{array}=
       \begin{array}{|c|ccc ccc|}\hline
{\color{red}x_{1}^{2}}-x_{0}x_{2}       & -x_{2}^{2}+x_{0}x_{4}&-x_{2}x_{4}&-x_{0}^{10}&-x_{4}^{2}+x_{0}^{9}x_{1}&x_{0}^{9}x_{2}&0\\
{\color{red}x_{2}^{2}}-x_{0}x_{4}        & {\color{red}x_{1}^{2}}-x_{0}x_{2}&-x_{0}x_{4}&-x_{1}x_{4}&0&x_{0}^{10}&x_{0}^{9}x_{1}-x_{4}^{2}\\
{\color{red}x_{1}x_{2}x_{4}}-x_{0}^{11}&  0&{\color{red}x_{1}}&{\color{red}x_{2}}&0&-x_{4}&0\\
{\color{red}x_{4}^{2}}-x_{0}^{9}x_{1} & 0  &-x_{0}^{2}&-x_{0}x_{1}&{\color{red}x_{1}^{2}}-x_{0}x_{2}&{\color{red}x_{1}x_{2}}&{\color{red}x_{2}^{2}}-x_{0}x_{4} \\ \hline
                                                         & x_{4}&-x_{2}&{\color{red}x_{1}}&x_{0}&{\color{blue}0}&{\color{blue}0}\\
                                                         &0&x_{4}&0&-x_{2}&{\color{red}x_{1}}&-x_{0}\\
                                                         &-x_{0}^{9}&0&-x_{4}&0&-x_{2}&{\color{red}x_{1}}\\ \hline
       \end{array}$$
\end{tiny}
Again the lead terms in suitable monomial orders are indicated in {\color{red}red}.
The universal unfolding has a section because the entries indicated by {\color{blue}blue} remain zero for degree reasons.
However this is a Weierstrass semigroup by \cite{Waldi80}, since the semigroup ring is an almost complete intersection.
Indeed a flat smoothing family is given by
    the polynomials
   $$\!\begin{array}{l}
      x_{1}^{2}-x_{2}x_{0}+x_{2}z^{7}\\
      x_{2}^{2}-x_{4}x_{0}\\
      x_{1}x_{2}x_{4}-x_{0}^{11}+x_{0}^{10}z^{7}-x_{0}^{2}z^{63}+x_{0}z^{70}\\
      x_{4}^{2}-x_{1}x_{0}^{9}-x_{1}z^{63}.
      \end{array}$$
In this family, the value of the section is always the point defined by the ideal $(x_{0},x_{1},x_{2},x_{4})$.
Computational steps to produce such families are described in Section \ref{comp}.
 \medbreak
\end{example}

\section{Further examples}

\begin{example}\label{higher genus} 
Let $L=\{6,9,13,16\}$ and consider the semigroup generated by $L'=2L \cup \{6+9\}=\{12,15,18,26,32\}$. The semigroup ideal $I_{L'}\subset S=\QQ[x_{0},x_{3},x_{6},x_{2},x_{8}]$ is generated by the image of $I_{L} \subset \QQ[x_{0},x_{3},x_{1},x_{4}]$ under the map $x_{i} \mapsto x_{2i}$ and the polynomial $f=x_{3}^{2}-x_{0}x_{6}$. Thus the resolution is the mapping cone of the endomorphism given by multiplication with $f$.
$$
\xymatrix{ S & \ar[l] S^{6} & \ar[l] S^{8}& \ar[l] S^{3}& \ar[l] 0 \\
\ar[u]^{f} S & \ar[u]^{f} \ar[l] S^{6} & \ar[u]^{f} \ar[l] S^{8}& \ar[u]^{f} \ar[l] S^{3}& \ar[l] 0 \\}
$$
For degree reasons, the universal unfolding has a 2-valued section and  the flat family will have an exact subcomplex of type
$$
\xymatrix{ S & \ar[l] S^{5} & \ar[l] S^{8}& \ar[l] S^{5} &\ar[l] S^{1} & \ar[l] 0. \\
}
$$
Jan Stevens (personal communication) showed that $L'$, like $L =\{6,9,13,16\}$, defines a smoothable curve singularity that is not 
smoothable in a positively graded family, and is thus not Weierstrass; indeed the 2-valued section defines a pair of double points in the general positively graded family.  Similar constructions yield a lot of Weierstrass semigroups whose families have a multivalued section. 
\end{example}

The previous example is closely related to Torres' construction~\cite{Torres94} of non-Weierstrass semigroups, of which the following is a straightforward extension. For the reader's convenience we give a proof.  

\begin{theorem}\label{torres}
Let $L$ be a non-Weierstrass semigroup and let $d$ be a prime. Any semigroup $L'$ generated by dL and a set
$X$ of  sufficiently large integers such that $d$ does not divide every $x\in X$ is  a non-Weierstrass semigroup.
\end{theorem}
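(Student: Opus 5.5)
We argue by contradiction: suppose $L'$ is the Weierstrass semigroup of a point $p$ on a smooth projective curve $C$ of genus $g' = g(L')$. The plan is to produce a degree $d$ cover $C\to D$ totally ramified at $p$, with $L$ the Weierstrass semigroup of the image point $q\in D$. This contradicts the hypothesis that $L$ is not Weierstrass. The tool is the Castelnuovo--Severi inequality. Pinkham's theorem (Proposition~\ref{full Pinkham}) is not needed, since the argument is purely about curves.

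\textbf{Step 1 (the genus of $L'$).} Let $g$ be the genus of $L$. The multiples of $d$ lying in $L'$ are exactly $dL$, provided every $x\in X$ not divisible by $d$ is large. In detail, any sum involving an element of $X$ is at least $\min X$, so choose $\min X > d(2g)$, beyond the conductor of $dL$. Then the gaps of $L'$ divisible by $d$ are the $g$ numbers $d\cdot(\text{gaps of } L)$. The gaps not divisible by $d$ number at least roughly $(1-1/d)\min X$. Hence $g' \ge g + (1-1/d)\min X - O(1)$, and $g'$ grows linearly in $\min X$. This is the sense in which "sufficiently large" enters.

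\textbf{Step 2 (producing the cover).} Pick $f\in H^0(C,\sO_C(ndp))$ with pole order exactly $nd$ for suitable generators $n\in L$. For instance, take functions $f_1,f_2$ with pole orders $dl_1$ and $dl_2$, where $l_1,l_2$ are the two smallest generators of $L$. These give maps $C\to \PP^1$ of degrees $dl_1$ and $dl_2$. The subfield $k(f_1,f_2,\dots)$ generated by all functions with pole order in $dL$ defines a map $\pi: C\to D$.

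I want to show $\deg \pi = d$. The degree $e=[k(C):k(D)]$ divides each pole order in $dL$, since pole orders of pullbacks are multiples of the ramification index at $p$. Suppose first that $e=1$, i.e.\ the functions of pole order in $dL$ generate $k(C)$. Then Castelnuovo--Severi (or a direct Riemann--Roch count with the map given by $f_1,f_2$ to a plane model of bounded degree) bounds $g'$ by a function of $l_1,l_2,d$ only. This contradicts Step~1 once $\min X$ is large. So $e>1$.

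Since $e$ must divide the ramification structure, $e\mid d$, and $d$ is prime. The extension cannot have degree $d^2$ or more, because the pole orders $dl_1$ and $dl_2$ have gcd $d\cdot\gcd(l_1,l_2)$. I will arrange the generators so that their gcd is $d$ (i.e.\ $\gcd(l_1,l_2)=1$); if needed, use several elements of $L$ with gcd $1$. Since $d$ is prime, $e=d$ follows. The point $p$ is totally ramified over $q=\pi(p)$.

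\textbf{Step 3 (identifying the semigroup at $q$).} A function $h$ on $D$ regular off $q$ with pole order $m$ pulls back to a function with pole order $dm$ at $p$. Thus $dS(D,q)\subseteq L'\cap d\ZZ = dL$, so $S(D,q)\subseteq L$. Conversely, take a function $u\in k(C)$ with pole order $dl$, $l\in L$. Its norm or trace to $k(D)$ is regular off $q$. To get pole order exactly $l$, instead apply Castelnuovo--Severi to $\pi$ and the map given by a function of pole order in $X$. This forces $g(D)\ge g$ once $g'$ is large. The inclusion $S(D,q)\subseteq L$ gives $g(D)\le g$. Hence $g(D)=g$ and $S(D,q)=L$, which contradicts $L$ being non-Weierstrass.

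The main obstacle is Step~2 and the genus comparison in Step~3: making precise that "sufficiently large" forces the subfield generated by the $dL$-part to be a proper degree $d$ subfield. Here I would first try Castelnuovo--Severi applied to the pair of morphisms given by $f_1$ and $f_2$.
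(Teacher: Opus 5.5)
Your overall strategy matches the paper's: a genus bound forces a factorization through a degree-$d$ cover $\pi\colon C\to D$ totally ramified at $p$, and then $L$ should be the semigroup of $q=\pi(p)$. The paper gets the cover from $\varphi_{|dnp|}$, with $n$ the largest minimal generator of $L$, together with Castelnuovo's bound. But Step 2 fails as written.

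The field generated by \emph{all} functions with pole order in $dL$ is always $k(C)$. Take $u$ regular off $p$ of pole order $y$, and $f$ of pole order $dm>y$ with $m\in L$. Then $f$ and $f+u$ both have pole order $dm$, so $u$ lies in that field. Hence your case $e=1$ always occurs and yields no genus bound. You must use finitely many functions of bounded pole order. For example, take $f_1,\dots,f_k$ of pole orders $dl_1,\dots,dl_k$, where the $l_i$ are the minimal generators of $L$, and set $k(D)=k(f_1,\dots,f_k)$. Then $e$ divides $d\gcd(l_i)=d$. If $e=1$, the map $(1:f_1:\dots:f_k)$ is birational onto a curve of degree $dn$, so Castelnuovo's bound (or a plane projection) bounds $g'$. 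Castelnuovo--Severi applied to $f_1,f_2$ alone does not cover this case, since $k(f_1,f_2)$ may be a proper subfield even when $k(f_1,\dots,f_k)=k(C)$.

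The more serious gap is in Step 3, where the inequality is reversed. The inclusion $S(D,q)\subseteq L$ means $q$ has at least as many gaps as $L$, so it gives $g(D)\ge g$, not $g(D)\le g$. Your Castelnuovo--Severi argument aims at the same inequality $g(D)\ge g$. It is also too weak: it gives $g'\le d\,g(D)+(d-1)(x-1)$, and the term $(d-1)(x-1)$ already exceeds $g'$ once $x$ is large. So the inclusion $L\subseteq S(D,q)$ is never proved.

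The missing observation is that the $f_i$ lie in $k(D)$ by construction. Since $\pi^{-1}(q)=\{p\}$ with ramification index $d$, each $f_i$ is a function on $D$ regular off $q$ with pole order exactly $dl_i/d=l_i$. Thus every generator of $L$ lies in $S(D,q)$. Combined with your correct inclusion $S(D,q)\subseteq L$ (which uses $L'\cap d\ZZ=dL$), this gives $S(D,q)=L$ directly, with no genus comparison and no norm or trace argument.
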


\begin{proof} Suppose $L'$ is the Weierstrass semigroup of a point $p$ in a smooth projective curve $\widetilde C$.
Let $n$ be the maximal element among the minimal generators of $L$. We assume that the new generators of $L'$ are larger than $dn$. Consider the morphism $\varphi_{|dn p|}\colon \widetilde C \to \PP^{r}$ where $r=|\{\ell \in L \mid \ell<n\}|$.

 If $\varphi_{|dnp|}$ were birational onto its image then the genus of $\widetilde C$ would be limited by Castelnuovo's bound for the genus of non-degenerate curves of degree $dn$ in $\PP^{r}$ \cite[Theorem 10.4]{MR4898511}. But if the integers in $X$
are sufficiently large, then the genus of the semigroup $L'$ exceeds this bound. Thus the morphism $\varphi_{|dnp|}$ must factor 
through a morphism $\psi$  to a smooth projective curve $C$. Since $d = \gcd(dL)$ is prime,
the degree of $\psi$ would be $d$, and $p$ would be a point of total ramification.
Thus  $\psi (p)\in C$ would have $L$ as its Weierstrass semigroup, a contradiction.
\end{proof}

\begin{example}\label{Torres example} For a specific instance of Theorem~\ref{torres} we may take the semigroup $L$ to be $\{6,9,13,16\}$, which has genus $13$ and, with $d = 2$, we take $L'=\{12,18,26,32, 2x+1\}$ with $x>55$. By \cite[Theorem 10.4]{MR4898511}, 
the genus of a non-degenerate curve of degree $dn=32$
in $\PP^{r}$  is bounded by
 $$\pi_{0}(dn,r)=(r-1)M(M-1)/2+M\epsilon$$
where $M = \lfloor(dn-1)/(r-1)\rfloor$ and $\epsilon = (dn-1) - (r-1)M$. 
Taking $r = 6, d=2, n=16$ we have
$\pi_{0}(dn,r) = 81$, but counting the gaps we see that
the genus of $L'$ is $g'=2\cdot 13+x>81$ for $x > 55$, proving that $L'$ is not Weierstrass.

This is not optimal: Taking $dn = 38$ it can be improved to  $g'> 80$ for $x> 54$.
\end{example}
\begin{remark}
 
\begin{enumerate}
\item There exist non-Weierstrass semigroups of multiplicity $m$ for every $m \ge 6$. Examples generalizing Buchweitz's construction provide non-Weierstrass semigroups of multiplicity $m$ and genus $m+3$ for every $m\ge 13$ \cite[Section 2]{MR1634703}. The degree-special semigroups of multiplicity $m \in \{6,\ldots,12\}$ of smallest genus are shown in Table 3.

\begin{table}[h]\label{table2}
\centering
\begin{tabular}{ccc}
\toprule
Multiplicity & Generators & Genus \\
\midrule
6  & $\{6, 9, 13, 16\}$  & 13 \\
7  & $\{7, 15, 16, 24\}$ & 22 \\
8  & $\{8, 9, 12, 13\}$  & 13 \\
9  & $\{9, 10, 14, 15\}$ & 17 \\
10 & $\{10, 11, 15, 16\}$ & 21 \\
11 & $\{11, 12, 17, 18\}$ & 26 \\
12 & $\{12, 13, 15, 29\}$ & 27 \\
12 & $\{12, 14, 15, 25\}$ & 27 \\
\bottomrule
\end{tabular}
\caption{Degree-special semigroups of multiplicity $m \in \{6, \ldots, 12\}$ 
of smallest genus, with non-Weierstrass semigroups of multiplicity 
$m \geq 13$ supplied by Buchweitz's construction.}
\end{table}

\medbreak
 \item Kunz and Waldi \cite{MR3621681} studied a family of semigroups $S$ such that when $S$ is minimally generated
 by 4 elements the resolution of the semigroup algebra has format 
 $\{1,6,8,3\}$. Moreover the ideal of a 4-generator Kunz-Waldi semigroup can be written as a sum of 3 perfect, 3-generator ideals \cite[Sect. 4.1]{singh2024}. The special semigroup with generators $\{6,9,13,16\}$ is Kunz-Waldi, 
 but the special semigroup $\{6, 9, 14,17\}$, for example, is not.
 
\end{enumerate}

\end{remark}

\section{Semigroups of genus $<13$}\label{comp}

To show that all 1413 semigroups of genus $<13$ are Weierstrass, we used
Pinkham's Theorem  \cite{MR376672} and 
 computed explicit one-parameter positively graded smoothing families  over $\QQ$ directions for each of the 632 examples that was not already known. This shows that they are Weierstrass semigroups over $\QQ$, and thus over $\CC$. 

We exhibit these families in the auxiliary file to the arXiv preprint of this paper  
or in the GitHub repository {\bf eisenbud/WeierstrassSemigroups} \url{https://github.com/eisenbud/WeierstrassSemigroups}. \\
In these places we also provide a {\it Macaulay2} program  {\bf 
allSemigroupsOfGe\-nusLessThan13AreWeierstrass.m2} that checks  flatness and smoothness for each of them. To run the code takes about 10 minutes.

Here is an outline of the computation we made to find a smoothing family for each semigroup. We begin the constructions over a finite field $\ZZ/p$ for computational efficiency. To aid in the eventual lifting to characteristic 0 we took $p$ on the order of $10^{7}$.

\begin{enumerate}
\item We form the graded universal unfolding of the semigroup ideal, adding only positive degree parameters.

An alternate approach is to use Nathan Ilten's {\it Macaulay2} package VersalDeformations \cite{VersalDeformationsSource, VersalDeformationsArticle}

Restricting to deformation parameters in a certain range of degrees, for example degrees $>b$, can make the computation
easier, and we were able to do this in most cases.

\item We use Gr\"obner bases to compute equations on the parameters that guarantee the constancy
of the Gr\"obner basis in the family, and thus flatness. Once such a family is found we prune the family
by eliminating the parameters
that are polynomial functions of other parameters, c.f. \cite{MR713093}.
\item Decompose the  base of the flat family into irreducible components and prune 
the family over each component again.
\item Pick a random point in each component and check the smoothness of the fiber over that point.

\item Checking smoothness of a fiber: If the set of generators $L$ of the semigroup has $n$ elements, then the affine curve $\widetilde C$ lies in $\AA^{n}$ and may have large codimension.  Hence computing all the minors of order $(n-1)$ of the Jacobian matrix $M$ of the ideal $I(\widetilde C)$ is often infeasible. We exploit the fact that the fibers are  1-dimensional as follows:

\begin{enumerate}
\item We find an ideal $J$ generated by some of the minors of order $n-1$ of $M$ that does not vanish identically on $\widetilde C$. 
For this, we need to find  minors $F$ not vanishing identically on $\widetilde C$.
To do this we look at the easier problem of finding complete intersections  $n-1$ homogeneous equations in the ideal of the semigroup. The minors of the corresponding equations of $\widetilde C$ often give us the  minors we need.

\item At this point, $\widetilde C$ is smooth outside the reduced points where $J$ vanishes. 
Represent the radical of  $J + I(\widetilde C)$
as an intersection of maximal ideals corresponding to points on $\widetilde C$.
 It now suffices to compute the rank of the Jacobian matrix at each of the 
  residue fields at these points, and computing the rank of a matrix over a field is easy.

\end{enumerate}

\item \label{6} Once we have found a flat smoothing family in characteristic $p$ over an irreducible component $B$ of the base, we try to lift it to a flat one-parameter family defined over $\QQ$. Any such lifting will be a smoothing as required.

\begin{enumerate}
 \item We try to find a point in $B$ with coordinates representable by integers of small absolute value. 
 \begin{enumerate}
     \item If the ideal of  $B$ is 0, we choose a point with small coefficients arbitrarily.
     \item We look for a subspace defined by $\dim B$ variables  whose common zero locus is a subspace $V$ of $B$.
    \item If the corresponding projection  map
    $$ \pi_{V}\colon B\setminus V \to \AA^{\dim B}$$ 
     is birational, then we can find a point $b\in B$ as the preimage of a point with small coordinates
    in $\AA^{\dim B}$.
  
   \item We then consider the fiber $C$ over $b$.
   \end{enumerate}
 \item (Clearing denominators) We multiply each equation of $C$ by a small integer and check whether, after multiplying, the coefficients can be represented by  integers of moderate size. If so we move these  equations to a polynomial ring over $\QQ$ and homogenize with an additional variable $z$.
 \item We then check whether the resulting family over $\Spec \QQ[z]$ is flat.
 \item If this is the case check whether the fiber over $z=1$ is a smooth curve.    
 \end{enumerate}
\end{enumerate}

The bottlenecks in this computation are computing  the flattening relations of a (restricted) unfolding, pruning its ideal, decomposing the base into  components; and checking smoothness. 

\begin{remark}
Step (\ref{6}.a) is  successful amazingly often. In fact for every one of the 632 semigroups of genus $<13$ that were not previously known to be Weierstrass with $B\neq 0$, we were able to find a subspace $V$, possibly after varying the base component $B$, and the choice of $b$. 

 A possible reason for this success is that the base of the deformation space is defined by a homogeneous ideal in a polynomial ring where  the variables have many different degrees. Quite often the largest degree variables occur only once and in quadratic terms of the equations, so that their values are determined by the values of the lower degree variables.
The fact that they often occur in quadratic monomials might come from the fact that the obstruction map is quadratic.
\end{remark}

\bibliographystyle{alpha}                  
\bibliography{MinimalNonWeierstrass}
\end{document}